\newtheorem{thm}{Theorem}[section]
\newtheorem{cor}[thm]{Corollary}
\newtheorem{lem}[thm]{Lemma}
\newtheorem{prop}[thm]{Proposition}
\theoremstyle{definition}
\newtheorem{dfn}[thm]{Definition}
\newtheorem{dfns}[thm]{Definitions}
\theoremstyle{remark}
\newtheorem{rmk}[thm]{Remark}
\newtheorem{rmks}[thm]{Remarks}
\newtheorem{example}[thm]{Example}
\newtheorem{examples}[thm]{Examples}
\newcommand{\N}{\mathbb{N}}
\newcommand{\Z}{\mathbb{Z}}
\newcommand{\NN}{\N}
\newcommand{\ZZ}{\Z}
\newcommand{\TT}{\mathbb{T}}
\def\clsp{\operatorname{\overline{span}}}
\def\sp{\operatorname{sp}}
\title{Simplicity of the $C^*$-algebras of skew product $k$-graphs}
\author{Ben Maloney and David Pask}
\address{School of Mathematics \& Applied Statistics \\ University
of Wollongong \\ Northfields Avenue \\ NSW 2522 \\ AUSTRALIA} \email{bkm611@uowmail.edu.au, dpask@uow.edu.au}
\keywords{$C^*$-algebra; Graph algebra; $k$-graph.}
\subjclass{Primary 46L05.}
\date{\today}
\begin{document}

\begin{abstract}
We consider conditions on a $k$-graph $\Lambda$, a semigroup $S$ and a functor $\eta : \Lambda \to S$ which ensure that the $C^*$-algebra of the skew-product graph $\Lambda \times_\eta S$ is simple. Our results allow give some necessary and sufficient conditions for the AF-core of a $k$-graph $C^{*}$-algebra to be simple.
\end{abstract}

\maketitle

\section{Introduction}

In \cite{RobertsonSteger} Robertson and Steger investigated $C^*$-algebras which they considered to be higher-rank versions of the Cuntz-Krieger algebras. Subsequently, in \cite{KP} Kumjian and Pask introduced higher-rank graphs, or $k$-graphs, as a graphical means to provide combinatorial models for the Cuntz-Krieger algebras of Robertson and Steger. They showed how to construct a $C^{*}$-algebra that is associated to a $k$-graph. Since then $k$-graphs and their $C^*$-algebras have attracted a lot of attention from many authors (see \cite{ALRS,DY,DPY,E,KP,LewinSims,MPR,PQR,PRW,PRho,Po,RSY,RobS}).

Roughly speaking, a $k$-graph is a category $\Lambda$ together with a functor $d : \Lambda \to \NN^k$ satisfying a certain factorisation property. A $1$-graph is then the path category of a directed graph. Given a functor $\eta : \Lambda \to S$, where $S$ is a semigroup with identity, we may form the skew product $k$-graph $\Lambda \times_\eta S$. Skew product graphs play an important part in the development of $k$-graph $C^*$-algebras. For example \cite[Corollary 5.3]{KP} shows that $C^* ( \Lambda \times_d \ZZ^k )$ is isomorphic to $C^* ( \Lambda ) \times_\gamma \TT^k$ where $\gamma : \TT^k \to \operatorname{Aut} C^* ( \Lambda )$ is the canonical gauge action. Skew product graphs feature in nonabelian duality: In \cite{MPR} it is shown that if a right-reversible semigroup (Ore semigroup) $S$ acts freely on a $k$-graph $\Lambda$ then the crossed product $C^* ( \Lambda ) \times S$ is stably isomorphic to $C^* ( \Lambda / S)$. On the other hand if $S$ is a group $G$ then $C^* ( \Lambda \times_\eta G)$ is isomorphic to the crossed product $C^* ( \Lambda ) \times_{\delta_\eta} G$ where $\delta_\eta$ is the coaction of $G$ on $C^* ( \Lambda )$ induced by $\eta$.

The main purpose of this paper is to investigate necessary and sufficient conditions for the $C^*$-algebra of a skew product $k$-graph to be simple. We will be particularly interested in the specific case when $S = \NN^k$ and $\eta=d$. It can be shown that simplicity of $C^* ( \Lambda \times_d \NN^k )$ is equivalent to simplicity of the fixed point algebra (AF core) $C^* ( \Lambda )^\gamma$.
This is important as many results in the literature apply particularly when AF core is simple; see \cite{ALRS}.

We begin by introducing some basic facts we will need during this paper.

\section{Background}

\subsection{Basic facts about $k$-graphs} \label{sec:bg}

All semigroups in this paper will be countable, cancellative and have an identity, hence any semigroup may be considered as a category with a single object. The semigroup $\NN^k$ is freely generated by $\{ e_1 , \ldots , e_k \}$ and comes with the usual order structure: if $n = \sum_{i=1}^k n_i e_i$ and $m = \sum_{i=1}^k m_i e_i$ then $m > n$ (resp.\ $m \ge n$) if $m_i > n_i$ (resp.\ $m_i \ge n_i$) for all $i$. For $m,n \in \NN^k$ we define $m \vee n \in \NN^k$ by $( m \vee n )_i= \max \{m_i , n_i \}$ for $i=1 , \ldots , k$.

A directed graph $E$ is a quadruple $(E^0,E^1,r,s)$ where $E^0,E^1$ are countable sets of vertices and edges. The direction of an edge $e \in E^1$ is given by the maps $r,s : E^1 \to E^0$. A path $\lambda$ of length $n \ge 1$ is a sequence $\lambda = \lambda_1 \cdots \lambda_n$ of edges such that $s(\lambda_i)=r(\lambda_{i+1})$ for $i=1,\ldots,n-1$. The set of paths of length $n \ge 1$ is denoted $E^n$. We may extend $r,s$ to $E^n$ for $n \ge 1$ by $r ( \lambda ) = r ( \lambda_1 )$ and $s(\lambda)=s(\lambda_n)$ and to $E^0$ by $r(v)=v=s(v)$.

A \textit{higher-rank graph} or \textit{$k$-graph} is a combinatorial structure, and is a $k$-dimensional analogue of a directed graph. A $k$-graph consists of a countable category $\Lambda$ together with a functor $d : \Lambda \rightarrow \NN^{k}$, known as the degree map, with the following factorisation property: for every morphism $\lambda \in \Lambda$ and every decomposition $d (\lambda) = m+n$, there exist unique morphisms $\mu, \nu \in \Lambda$ such that $d (\mu) = m$, $d (\nu) = n$, and $\lambda = \mu \nu$.

For $n \in \NN^k$ we define $\Lambda^n := d^{-1} (n)$ to be those morphisms in $\Lambda$ of degree $n$. Then by the factorisation property $\Lambda^{0}$ may be identified with the objects of $\Lambda$, and are called vertices. For $u,v \in \Lambda^{0}$, $ X \subseteq \Lambda$ and $n \in \NN^k$ we set
\begin{equation*}
u X = \{ \lambda \in X : r ( \lambda ) = u \} \quad X v = \{ \lambda \in X : s ( \lambda ) = v \} \quad
u X v = uX \cap Xv .
\end{equation*}

\noindent
A $k$-graph $\Lambda$ is visualised by a $k$-coloured directed graph $E_\Lambda$ with vertices $\Lambda^0$ and edges $\sqcup_{i=1}^k \Lambda^{e_i}$ together with range and source maps inherited from $\Lambda$ called its $1$-skeleton. The $1$-skeleton is provided with square relations $\mathcal{C}_\Lambda$ between the edges in $E_\Lambda$, called \textit{factorisation rules}, which come from factorisations of morphisms in $\Lambda$ of degree $e_i + e_j$ where $i \neq j$. By convention the edges of degree $e_1$ are drawn blue (solid) and the edges of degree $e_2$ are drawn red (dashed). For more details about the $1$-skeleton of a $k$-graph see \cite{RSY}. On the other hand, if $G$ is a $k$-coloured directed graph with a complete, associative collection of square relations $\mathcal{C}$ completely determines a $k$-graph $\Lambda$ such that $E_\Lambda=G$ and $\mathcal{C}_\Lambda=\mathcal{C}$ (see \cite{HRSW}).

A $k$-graph $\Lambda$ is \textit{row-finite} if for every $v\in\Lambda^{0}$ and every $n\in\N^{k}$, $v\Lambda^{n}$ is finite. A $k$-graph has \emph{no sources} if $v \Lambda^n \neq \emptyset$ for all $v \in \Lambda^0$ and nonzero $n \in \NN^k$. A $k$-graph has \emph{no sinks} is $\Lambda^n v \neq \emptyset$ for all $v \in \Lambda^0$ and nonzero $n \in \NN^k$.

For $\lambda \in \Lambda$ and $m \leq n \leq d ( \lambda )$, we define $\lambda ( m , n )$ to be the unique path in $\Lambda^{ n - m }$ obtained from the $k$-graph factorisation property such that $\lambda = \lambda^{ \prime } ( \lambda ( m , n ) ) \lambda^{ \prime \prime }$ for some $\lambda^{ \prime } \in \Lambda^{m}$ and $\lambda^{ \prime \prime }\in \Lambda^{ d (\lambda) - n}$.

\begin{examples} \label{ex:kex}
\begin{enumerate}[(a)]
\item In \cite[Example 1.3]{KP} it is shown that the path category $E^*=\cup_{i \ge 0} E^i$ of a directed graph $E$ is a $1$-graph, and vice versa. For this reason we shall move seamlessly between $1$-graphs and directed graphs.
\item For $k \ge 1$ let $T_k$ be the category with a single object $v$ and generated by $k$ commuting morphisms $\{ f_1 , \ldots , f_k \}$. Define $d : T_k \to \NN^k$ by $d ( f_1^{n_1} \ldots f_k^{n_k} ) = ( n_1, \ldots , n_k )$ then it is straightforward to check that $T_k$ is a $k$-graph. We frequently identify $T_k$ with $\NN^k$ via the map $f_1^{n_1} \cdots f_k^{n_k} \mapsto ( n_1 , \ldots , n_k )$.
\item For $k \ge 1$ define a category $\Delta_k$ as follows: Let $\operatorname{Mor} \Delta_k = \{ (m,n) \in \mathbb{Z}^k \times \mathbb{Z}^k : m \le n \}$ and $\operatorname{Obj} \Delta_k = \mathbb{Z}^k$; structure maps $r (m,n) = m$, $s (m,n) = n$, and composition $(m,n)(n,p)=(m,p)$. Define $d : \Delta_k \to \NN^k$ by $d (m,n) = n-m$, then one checks that $( \Delta_k , d )$ is a row-finite $k$-graph. We identify $\operatorname{Obj} \Delta_k$ with $\{ (m,m) : m \in \mathbb{Z}^k \} \subset \operatorname{Mor} \Delta_k$.
\item For $n \geq 1$ let $\underline{n} = \{ 1 , \ldots , n \}$. For $m,n \geq 1$ let $\theta : \underline{m} \times \underline{n} \to \underline{m} \times \underline{n}$ a bijection. Let $\mathbb{F}^2_\theta$ be the $2$-graph which has $1$-skeleton which consists of with single vertex $v$ and edges $f_1 , \ldots , f_m , g_1 , \ldots , g_n$, such that $f_i$ have the same colour (blue) for $i \in \underline{m} $ and $g_j$ have the same colour (red) for $j \in \underline{n}$ together with complete associative square relations $f_i g_j = g_{j'} f_{i'}$ where $\theta ( i , j ) = ( i' , j' )$ for $(i,j) \in \underline{m} \times \underline{n}$ (for more details see \cite{DPY,DY,Po}).
\end{enumerate}
\end{examples}


\subsection{Skew product $k$-graphs}

Let $\Lambda$ be a $k$-graph and $\eta:\Lambda\to S$ a functor into a semigroup $S$. We can make the cartesian product $\Lambda\times S$ into a $k$-graph $\Lambda\times_\eta S$ by taking $(\Lambda\times_\eta S)^0=\Lambda^0\times S$, defining $r,s:\Lambda\times_\eta S\to (\Lambda\times_\eta S)^0$ by
\begin{equation} \label{eq:rsdef}
r(\lambda,t)=(r(\lambda),t)\ \text{ and }\ s(\lambda,t)=(s(\lambda),t\eta(\lambda)),
\end{equation}
defining the composition by
\[
(\lambda,t)(\mu, u)=(\lambda\mu,t)\ \text{ when $s(\lambda,t)=r(\mu, u)$ (so that $u=t\eta(\lambda)$\,),}
\]
and defining $d:\Lambda\times_\eta S\to \N^k$ by $d(\lambda,t)=d(\lambda)$. As in \cite{MPR} it is straightforward to show that this defines a $k$-graph.


\begin{rmk} If $\Lambda$ is row-finite with no sources and $\eta : \Lambda \to S$ a functor
then $\Lambda \times_\eta S$ is row-finite with no sources.
\end{rmk}

A $k$-graph morphism is a degree preserving functor between two $k$-graphs. If a $k$-graph morphism is bijective, then it is called an isomorphism.

\begin{examples} \label{ex:pi}
\begin{enumerate}[(i)]
\item Let $\Lambda$ be a $k$-graph and $\eta : \Lambda \to S$ a functor, where $S$ is a semigroup and $\Lambda \times_\eta S$ the associated skew product graph. Then the map $\pi : \Lambda \times_\eta S \to \Lambda$ given by $\pi ( \lambda , s ) = \lambda$ is a surjective $k$-graph morphism.
\item For $k \geq 1$ the map $( k , m ) \mapsto (m,m+k)$ gives an isomorphism from $T_k \times_d \ZZ^k$ to $\Delta_k$.
\end{enumerate}
\end{examples}

\begin{dfn} \label{dfn:path-lift}
Let $\Lambda, \Gamma$ be row-finite $k$-graphs. A surjective $k$-graph morphism $p : \Lambda \to \Gamma$ has \textit{$r$-path lifting} if for all $v \in \Lambda^0$ and $\lambda \in p(v) \Gamma$ there is $\lambda' \in v \Lambda$ such that $p ( \lambda' ) = \lambda$. If $\lambda'$ is the unique element with this property then $p$ has \textit{unique $r$-path lifting}.
\end{dfn}

\begin{example} \label{ex:rlift}
Let $\Lambda$ be a row-finite $k$-graph and $\eta : \Lambda \to S$ a functor where $S$ is a semigroup, and $\Lambda \times_\eta S$ the associated skew product graph. The map $\pi : \Lambda \times_\eta S \to \Lambda$ described in Examples~\ref{ex:pi}(i) has unique $r$-path lifting.
\end{example}

\subsection{Connectivity} \label{ss:conn}

A $k$-graph $\Lambda$ is \textit{connected} if the equivalence relation on $\Lambda^0$ generated by the relation $\{ (u,v) : u \Lambda v \neq \emptyset \}$ is $\Lambda^0 \times \Lambda^0$. The $k$-graph $\Lambda$ is \textit{strongly connected} if for all $u, v \in \Lambda^0$ there is $N > 0$ such that $u \Lambda^N v \neq \emptyset$. If $\Lambda$ is strongly connected, then it is connected and has no sinks or sources.
The $k$-graph $\Lambda$ is \emph{primitive} is there is $N > 0$ such that $u \Lambda^N v \neq \emptyset$ for all $u,v \in \Lambda^0$. If $\Lambda$ is primitive then it is strongly connected.

\begin{examples}
The graphs $T_k$ and $\mathbb{F}_\theta^2$ from Examples~\ref{ex:kex} are primitive since they only have one vertex.
\end{examples}

The connectivity of a $k$-graph may also be described in terms of its component matrices as defined in \cite[\S 6]{KP}:
Given a $k$-graph $\Lambda$, for $1\leq i\leq k$ and $u,v\in\Lambda^{0}$, we define $k$ non-negative $\Lambda^0 \times \Lambda^0$ matrices $M_{i}$ with entries $M_{i}(u,v)=|u\Lambda^{e_{i}}v|$. Using the $k$-graph factorisation property, we have that $|u\Lambda^{e_{i}+e_{j}}v|=|u\Lambda^{e_{j}+e_{i}}v|$ for all $u,v \in \Lambda^0$, and so $M_{i}M_{j}=M_{j}M_{i}$. For $m=(m_{1},\ldots,m_{k})\in\N^{k}$ and $u,v \in \Lambda^0$, we have $|u\Lambda^{m}v|=(M_{1}^{m_{1}} \cdots M_{k}^{m_{k}})(u,v)=M^{m}(u,v)$, using multiindex notation. The following lemma follows directly from the definitions given above.

\begin{lem} \label{lem:matrixcond}
Let $\Lambda$ be a row-finite $k$-graph with no sources.
\begin{enumerate}[(a)]
\item Then $\Lambda$ is strongly connected if and only if for all pairs $u,v\in\Lambda^{0}$ there is there is $N \in \mathbb{N}^k$ such that $M^{N}(u,v) > 0$.
\item Then $\Lambda$ is primitive if and only if there is $N >0$ such that $M^{N}(u,v) > 0$ for all pairs $u,v\in\Lambda^{0}$.
\end{enumerate}
\end{lem}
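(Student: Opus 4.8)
The plan is to reduce both parts to the identity $M^{N}(u,v)=|u\Lambda^{N}v|$ recorded just before the statement, which converts every combinatorial condition of the form $u\Lambda^{N}v\neq\emptyset$ into the matrix condition $M^{N}(u,v)>0$, and conversely. With this dictionary in hand, part (b) is essentially a direct translation: primitivity asserts the existence of a single $N>0$ with $u\Lambda^{N}v\neq\emptyset$ for every pair $u,v$, and applying the identity to this fixed $N$ turns $u\Lambda^{N}v\neq\emptyset$ into $M^{N}(u,v)>0$ for all $u,v$ simultaneously. Since the same $N$ is quantified in the same way on both sides, the two conditions are literally the same statement read through the identity, and no further work is needed.

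For part (a) I would first dispose of the easy implication: if $\Lambda$ is strongly connected then for each pair $u,v$ there is $N>0$ with $u\Lambda^{N}v\neq\emptyset$, so $M^{N}(u,v)=|u\Lambda^{N}v|>0$, giving at once an $N\in\mathbb{N}^{k}$ with $M^{N}(u,v)>0$. The content lies in the converse. Suppose that for every pair there is some $N\in\mathbb{N}^{k}$ (possibly with vanishing coordinates) satisfying $M^{N}(u,v)>0$, i.e.\ there is a path $\lambda\in u\Lambda^{N}v$. I must upgrade this to a path whose degree is strictly positive in every coordinate, which is exactly where the no-sources hypothesis enters.

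This upgrading (boosting) step is the heart of the argument, and I expect it to be the main obstacle to write cleanly. Fix $u,v$ and a path $\lambda\in u\Lambda^{N}v$. If some coordinate $N_{i}=0$, use the no-sources hypothesis to choose an edge $f\in s(\lambda)\Lambda^{e_{i}}$, say with $s(f)=w$; then invoke the standing hypothesis for the pair $(w,v)$ to obtain a path $\gamma\in w\Lambda^{M}v$, and replace $\lambda$ by $\lambda f\gamma\in u\Lambda^{N+e_{i}+M}v$. The new degree has $i$-th coordinate at least $1$ and is coordinatewise no smaller than $N$, so no previously positive coordinate is destroyed. Iterating over the (at most $k$) vanishing coordinates produces a path in $u\Lambda^{N'}v$ with $N'>0$, whence $M^{N'}(u,v)>0$ with $N'>0$, so $\Lambda$ is strongly connected.

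A couple of routine checks remain: that $M^{N}(u,v)=|u\Lambda^{N}v|$ is used correctly, so that positivity of a matrix entry is synonymous with nonemptiness of the corresponding path set, and that the boosting terminates after finitely many steps because each step strictly increases the number of positive coordinates without ever decreasing a coordinate. Row-finiteness is needed only to guarantee that the entries $M_{i}(u,v)$ are finite, so that the component matrices are well defined; it plays no role in the positivity bookkeeping. If part (b) were instead read with $N$ allowed to vanish in some coordinate, the same boosting idea would apply, but one would have to perform it uniformly in $u,v$, and that uniform version is the only place where care beyond part (a) would be required.
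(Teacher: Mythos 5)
Your proposal is correct. Note that the paper offers no written proof at all: the lemma is prefaced by ``The following lemma follows directly from the definitions given above,'' so there is nothing to compare step by step. Your write-up supplies exactly the one piece of genuine content that this dismissal glosses over: in part (a) the matrix condition only asks for some $N\in\mathbb{N}^{k}$, while strong connectivity demands $N>0$ in every coordinate, so the converse direction needs the boosting argument. Your boosting step is sound --- given $\lambda\in u\Lambda^{N}v$ with $N_{i}=0$, the no-sources hypothesis gives $f\in v\Lambda^{e_{i}}$ (recall $v\Lambda^{e_{i}}\neq\emptyset$ is precisely what ``no sources'' asserts), and reapplying the hypothesis to the pair $(s(f),v)$ returns you to $v$ with degree $N+e_{i}+M$; since no coordinate ever decreases and one new coordinate becomes positive at each stage, the process terminates after at most $k$ steps. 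Your observations that part (b) is a literal translation because both sides quantify the same $N>0$, and that row-finiteness enters only to make the matrices well defined, are also accurate. In short, your proof is a correct and more honest version of what the authors treat as immediate.
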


\begin{rmks} \label{rmks:noss}
Following \cite[\S 4]{PRho}, a primitive $1$-graph $\Lambda$ is strongly connected with period $1$; that is, the greatest common divisor of all $n$ such that $ v \Lambda^n v$ for some $v\in \Lambda^0$ is $1$.
\end{rmks}

\begin{lem} \label{lemma:cofinal-loops}
Let $\Lambda$ be a $k$-graph with no sinks, and $\Lambda^{0}$ finite. Then for all $v\in\Lambda^{0}$, there exists $w\in\Lambda^{0}$ and $\alpha\in w\Lambda w$ such that $d(\alpha)>0$ and $w\Lambda v\neq\emptyset$.
\end{lem}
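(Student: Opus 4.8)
The plan is to exploit the no-sinks hypothesis to manufacture an arbitrarily long path terminating at $v$, and then to invoke the finiteness of $\Lambda^0$ through the pigeonhole principle to locate a vertex that is repeated along this path. The repeated vertex will serve as $w$, the portion of the path between the two occurrences will be the loop $\alpha$, and the initial segment will witness $w\Lambda v\neq\emptyset$.

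First I would fix $v\in\Lambda^0$ and construct inductively a sequence of vertices $v=v_0,v_1,v_2,\ldots$ together with edges of degree $e_1$. Since $e_1$ is a nonzero element of $\NN^k$ and $\Lambda$ has no sinks, $\Lambda^{e_1}v_{n-1}\neq\emptyset$ for every $n\geq 1$, so I may choose $\lambda_n\in\Lambda^{e_1}v_{n-1}$ and set $v_n:=r(\lambda_n)$. As $s(\lambda_n)=v_{n-1}=r(\lambda_{n-1})$, the consecutive edges are composable, so the products $\mu_n:=\lambda_n\lambda_{n-1}\cdots\lambda_1$ are well-defined morphisms with $r(\mu_n)=v_n$ and $s(\mu_n)=v$; since $d$ is a functor, $d(\mu_n)=ne_1$.

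Next, finiteness of $\Lambda^0$ gives, by pigeonhole applied to $v_0,\ldots,v_{|\Lambda^0|}$, a pair of indices $0\leq i<j\leq|\Lambda^0|$ with $v_i=v_j=:w$. Setting $\alpha:=\lambda_j\lambda_{j-1}\cdots\lambda_{i+1}$ (again a legitimate composite of composable edges), I obtain $r(\alpha)=v_j=w$ and $s(\alpha)=v_i=w$, so $\alpha\in w\Lambda w$, with $d(\alpha)=(j-i)e_1>0$ because $j>i$. Finally, the initial segment $\mu_i=\lambda_i\cdots\lambda_1$, interpreted as the trivial vertex path at $v$ when $i=0$, satisfies $r(\mu_i)=v_i=w$ and $s(\mu_i)=v$, hence $\mu_i\in w\Lambda v$ and in particular $w\Lambda v\neq\emptyset$.

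I do not expect any serious obstacle here, as this is a standard loop-extraction argument. The only points demanding care are the bookkeeping of source and range under composition, to guarantee that each product $\mu_n$ and the loop $\alpha$ are genuinely composable, and that the degrees accumulate as positive multiples of $e_1$ so that $d(\alpha)>0$; and the degenerate case $i=0$, in which $w=v$ and the connecting path is simply the vertex $v$, which nonetheless lies in $w\Lambda v=v\Lambda v$.
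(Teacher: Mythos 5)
Your overall strategy is exactly the paper's: build an arbitrarily long path terminating at $v$ by repeatedly using the no-sinks hypothesis, apply the pigeonhole principle to the finite vertex set to find a repeated vertex $w$, extract the loop $\alpha$ between the two occurrences, and use the initial segment to witness $w\Lambda v\neq\emptyset$. However, there is a genuine gap in your choice of degree increment. You extend by edges of degree $e_1$ at each step, so your loop has $d(\alpha)=(j-i)e_1$. In this paper the relation $m>n$ on $\NN^k$ is the \emph{coordinatewise strict} order ($m_i>n_i$ for all $i$), so $d(\alpha)>0$ means every coordinate of $d(\alpha)$ is strictly positive. For $k\geq 2$ the degree $(j-i)e_1$ has zero entries in coordinates $2,\ldots,k$, so it is nonzero but not $>0$, and the conclusion of the lemma fails for your $\alpha$. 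This is not a cosmetic issue: the lemma is invoked in Proposition~\ref{prop:cofinal-SC}(2) precisely to choose $t$ with $td(\alpha)>N$ for an arbitrary $N\in\NN^k$, which is impossible if some coordinate of $d(\alpha)$ vanishes.

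The repair is immediate and is what the paper does: at each stage use the no-sinks hypothesis with $p=(1,\ldots,1)$ rather than $e_1$, choosing $\beta_n\in\Lambda^{p}v_{n-1}$ (this is legitimate since $p$ is nonzero). Then the extracted loop has degree $(j-i)p$, which is strictly positive in every coordinate. Everything else in your argument, including the careful bookkeeping of sources and ranges and the degenerate case $i=0$ where the connecting morphism is the identity at $v$, goes through unchanged.
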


\begin{proof}
Let $p=(1,\ldots,1)\in\N^{k}$. Since $v$ is not a sink, there exists $\beta_{1}\in\Lambda^{p}v$. Since $r(\beta_{1})$ is not a sink, there exists $\beta_{2}\in\Lambda^{p}r(\beta_{1})$. Inductively, there exist infinitely many $\beta_{i}$ such that $d(\beta_{i})=p$ and $r(\beta_{i})=s(\beta_{i+1})$. Since $\Lambda^{0}$ is finite, there exists $w\in\Lambda^{0}$ such that $r(\beta_{i})=w$ for infinitely many $i$. Suppose $r(\beta_{n})=w=r(\beta_{m})$ with $m>n$. Then $\alpha=\beta_{m}\ldots\beta_{n+1}$ has the requisite properties, and $w\Lambda v\neq\emptyset$, since $\beta_{n}\ldots\beta_{1}\in w\Lambda v$.
\end{proof}

\subsection{The graph $C^{*}$-algebra} \label{ss:graph-alg}

Let $\Lambda$ be a row-finite $k$-graph with no sources, then following \cite{KP}, a \textit{Cuntz-Krieger $\Lambda$-family} in a $C^*$-algebra $B$ consists of partial isometries $\{S_\lambda:\lambda\in\Lambda\}$ in $B$ satisfying the \textit{Cuntz-Krieger relations}:
\begin{itemize}\item[]\begin{itemize}
\item[(CK1)] $\{S_v:v\in \Lambda^0\}$ are mutually orthogonal projections;
\item[(CK2)] $S_{\lambda}S_{\mu}=S_{\lambda\mu}$ whenever $s(\lambda)=r(\mu)$;
\item[(CK3)] $S_\lambda^*S_\lambda=S_{s(\lambda)}$ for every $\lambda\in\Lambda$;
\item[(CK4)] $S_v=\sum_{\{\lambda\in v\Lambda^{n}\}}S_\lambda S_\lambda^*$ for every $v\in \Lambda^0$ and $n\in \N^k$.
\end{itemize}\end{itemize}
The $k$-graph $C^{*}$-algebra $C^*(\Lambda)$ is generated by a universal Cuntz-Krieger $\Lambda$-family $\{s_\lambda\}$. By \cite[Proposition~2.11]{KP} there exists a Cuntz-Krieger $\Lambda$-family such that each vertex projection $S_v$ (and hence by (CK3) each $S_{\lambda}$) is nonzero and so there exists a nonzero universal $k$-graph $C^{*}$-algebra for a Cuntz-Krieger $\Lambda$-family. Moreover,
\[
C^*(\Lambda)=\clsp\{s_\lambda s_\mu^*:\lambda,\mu\in\Lambda, s(\lambda)=s(\mu)\}\ \text{ (see \cite[Lemma~3.1]{KP}).}
\]

\noindent
We will use~\cite[Theorem 3.1]{RobS} by Robertson and Sims when considering the simplicity of graph $C^{*}$-algebras:

\begin{thm}[Robertson-Sims] \label{thm:rs}
Suppose $\Lambda$ is a row-finite $k$-graph with no sources. Then $C^{*}(\Lambda)$ is simple if and only if $\Lambda$ is cofinal and aperiodic.
\end{thm}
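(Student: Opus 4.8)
The plan is to deduce this from the two standard uniqueness theorems for $k$-graph $C^*$-algebras together with the correspondence between gauge-invariant ideals and saturated hereditary subsets of $\Lambda^0$. Recall the gauge action $\gamma : \TT^k \to \Aut C^*(\Lambda)$ determined by $\gamma_z(s_\lambda) = z^{d(\lambda)} s_\lambda$, and the associated faithful conditional expectation $\Phi$ onto the fixed-point algebra $C^*(\Lambda)^\gamma$ obtained by averaging over $\TT^k$. I would use two facts in black-box form: the gauge-invariant uniqueness theorem, and the Cuntz--Krieger uniqueness theorem, which asserts that when $\Lambda$ is aperiodic any Cuntz--Krieger $\Lambda$-family with all vertex projections nonzero generates a faithful copy of $C^*(\Lambda)$. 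For a hereditary, saturated set $H \subseteq \Lambda^0$ one has a gauge-invariant ideal $I_H = \clsp\{ s_\lambda s_\mu^* : s(\lambda) = s(\mu) \in H\}$, and conversely each ideal $I$ determines the saturated hereditary set $H_I = \{ v \in \Lambda^0 : s_v \in I \}$.

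For the direction ($\Leftarrow$), assume $\Lambda$ is cofinal and aperiodic and let $I$ be a nonzero ideal. First I would show $H_I \ne \emptyset$: if $H_I = \emptyset$ then the quotient map $q : C^*(\Lambda) \to C^*(\Lambda)/I$ sends each vertex projection $s_v$ to a nonzero projection, so the Cuntz--Krieger $\Lambda$-family $\{ q(s_\lambda)\}$ has all vertex projections nonzero; aperiodicity and the Cuntz--Krieger uniqueness theorem then force $q$ to be injective, whence $I = 0$, a contradiction. Thus $H_I$ is a nonempty saturated hereditary set, and cofinality is exactly the statement that the only such sets are $\emptyset$ and $\Lambda^0$, giving $H_I = \Lambda^0$. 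Then every $s_v \in I$, and since $s_\lambda = s_\lambda s_{s(\lambda)}$ every generator lies in $I$, so $I = C^*(\Lambda)$ and the algebra is simple.

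For the direction ($\Rightarrow$) I would argue by contraposition. If $\Lambda$ is not cofinal, the failure of cofinality produces a proper nonempty saturated hereditary set $H$, and the corresponding ideal $I_H$ is a nontrivial gauge-invariant ideal, so $C^*(\Lambda)$ is not simple. If instead $\Lambda$ is not aperiodic, I would use the periodicity to build a proper ideal: failure of aperiodicity localises to a vertex all of whose infinite paths are periodic, and this periodicity yields a nonzero element fixed by a subaction of the gauge action that generates a proper ideal (equivalently, a nontrivial central element in a suitable corner), contradicting simplicity.

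The main obstacle is the periodicity-implies-nonsimple half of ($\Rightarrow$): unlike the cofinality obstruction, which is visible directly on $\Lambda^0$, the ideal coming from periodicity is \emph{not} gauge-invariant and must be produced from the combinatorial periodicity data via the structure of the infinite path groupoid. A secondary technical point is establishing the Cuntz--Krieger uniqueness theorem from aperiodicity in the first place, which rests on an averaging argument over $\TT^k$ combined with the existence of aperiodic boundary paths to separate the range of $\Phi$; once that uniqueness theorem is available, the ($\Leftarrow$) direction and the cofinality half of ($\Rightarrow$) are routine.
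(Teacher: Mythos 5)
There is no proof of Theorem~\ref{thm:rs} in the paper to compare against: the result is imported verbatim from Robertson and Sims \cite{RobS} and used as a black box. Measured against the actual proof in that reference, your outline is the standard one and is structurally sound: the backward direction via the Cuntz--Krieger uniqueness theorem together with the bijection between gauge-invariant ideals and saturated hereditary subsets of $\Lambda^0$, and the forward direction by producing a nontrivial gauge-invariant ideal from a failure of cofinality and a nontrivial non-gauge-invariant ideal from local periodicity. You have also correctly identified where the substance lies --- which is precisely why, as written, this is a roadmap rather than a proof. The two steps you defer are essentially the whole theorem: (i) deriving the Cuntz--Krieger uniqueness theorem from aperiodicity, and (ii) converting local periodicity at a vertex $v$ into a proper ideal. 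For (ii) it is worth recording the mechanism in \cite{RobS}: local periodicity $(m,n)$ at $v$ forces the identity $\lambda(m,m+d(\lambda)-(m\vee n))=\lambda(n,n+d(\lambda)-(m\vee n))$ for \emph{every} sufficiently long $\lambda\in v\Lambda$, and summing the resulting elements $s_\alpha s_{\beta(\alpha)}^*$ over the paths supplied by the periodicity yields a unitary $V$ in the centre of the corner $s_v C^*(\Lambda)s_v$ that is not a scalar multiple of $s_v$; the ideal generated by $V-zs_v$ for a suitable $z\in\TT$ is then shown to be proper. A smaller point: your assertion that cofinality is ``exactly'' the absence of nontrivial saturated hereditary subsets is true for the Lewin--Sims formulation used in this paper, but it is not a tautology --- heredity plus saturation give one implication directly, while the converse requires actually constructing a saturated hereditary set from a non-cofinal pair $(v,w)$ (most easily via the infinite-path reformulation of cofinality), and this deserves either an argument or a citation.
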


\noindent
We now focus on the two key properties involved in the simplicity criterion of Theorem \ref{thm:rs}, namely aperiodicity and cofinality. Our attention will be directed towards applying these conditions on skew product graphs.

\section{Aperiodicity}

Our definition of aperiodicity is taken from Robertson-Sims,~\cite[Theorem 3.2]{RobS}.

\begin{dfns} A row-finite $k$-graph $\Lambda$ with no sources has \textit{no local periodicity} at $v\in\Lambda^{0}$ if for all $m\neq n\in\NN^{k}$ there exists a path $\lambda\in v\Lambda$ such that $d(\lambda)\geq m\vee n$ and
\[
\lambda(m,m+d(\lambda)-(m\vee n))\neq\lambda(n,n+d(\lambda)-(m\vee n)).
\]

\noindent $\Lambda$ is called \emph{aperiodic} if every $v \in \Lambda^0$ has no local periodicity.
\end{dfns}

\begin{examples} \label{ex:aperiodex}
\begin{enumerate}[(a)]
\item The $k$-graph $\Delta_k$ is aperiodic for all $k \geq 1$. First observe that there is no local periodicity at $v=(0,0)$. Given $m \neq n \in \mathbb{N}^k$, let $N \geq m \vee n$; then $\lambda = (0,N)$ is the only element of $v \Delta_k$. Then $\lambda(m,m) = (m,m) \neq (n,n) = \lambda(n,n)$. A similar argument applies for any other vertex $w= (n,n)$ in $\Delta_k$ and so there is no local periodicity at $w$ for all $w \in \Delta_k^0$.
\item The $k$-graph $T_k$ is not aperiodic for all $k \geq 1$. For all $n \in \NN^k$ one checks that $f_1^{n_1} \cdots f_k^{n_k}$ is the only element of $v T_k^n$. Hence given $m \neq n \in \mathbb{N}^k$ it follows that for all $\lambda \in v \Lambda^{N}$ with $N \geq m \vee n$ we have
\[
\lambda (m,m+(m \vee n )) = \lambda ( n , n + ( m \vee n ) ) .
\]
\end{enumerate}
\end{examples}

%

\noindent
Since the map $\pi : \Lambda \times_\eta S \to \Lambda$ has unique $r$-path lifting, we wish to know if we can deduce the aperiodicity of $\Lambda \times_\eta S$ from that of $\Lambda$. A corollary of our main result Theorem~\ref{thm:lifts2}, shows that this is true.

\begin{thm} \label{thm:lifts2}
Let $\Lambda, \Gamma$ be row-finite $k$-graphs and $p : \Lambda \to \Gamma$ have $r$-path lifting. If $\Gamma$ is aperiodic, then $\Lambda$ is aperiodic.
\end{thm}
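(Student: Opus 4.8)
The plan is to verify directly that $\Lambda$ has no local periodicity at every vertex, transporting a witnessing path from $\Gamma$ via the lifting property. Fix $v \in \Lambda^0$ and $m \neq n$ in $\NN^k$. Since $p$ is a surjective $k$-graph morphism, $p(v) \in \Gamma^0$, so aperiodicity of $\Gamma$ furnishes a path $\mu \in p(v)\Gamma$ with $d(\mu) \geq m \vee n$ and
\[
\mu\bigl(m, m + d(\mu) - (m\vee n)\bigr) \neq \mu\bigl(n, n + d(\mu) - (m\vee n)\bigr).
\]
Because $\mu \in p(v)\Gamma$, the $r$-path lifting property yields $\lambda \in v\Lambda$ with $p(\lambda) = \mu$; as $p$ is degree preserving we have $d(\lambda) = d(\mu) \geq m \vee n$, so $\lambda$ is the natural candidate witness for no local periodicity of $\Lambda$ at $v$.

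The crux is to show that $p$ intertwines the operation of extracting segments, i.e.\ that $p(\lambda(a,b)) = p(\lambda)(a,b)$ whenever $a \leq b \leq d(\lambda)$. This follows from the factorisation property: writing $\lambda = \lambda' \, \lambda(a,b) \, \lambda''$ with $d(\lambda') = a$ and $d(\lambda'') = d(\lambda) - b$, and applying the functor $p$, gives $p(\lambda) = p(\lambda')\, p(\lambda(a,b))\, p(\lambda'')$, which (since $p$ preserves degree) is a factorisation of $\mu = p(\lambda)$ into morphisms of degrees $a$, $b-a$ and $d(\mu)-b$. By the uniqueness clause of the factorisation property in $\Gamma$, the middle term $p(\lambda(a,b))$ must equal $\mu(a,b)$.

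With this intertwining in hand the conclusion is immediate. Applying it to the two segments of $\lambda$ appearing in the definition gives $p\bigl(\lambda(m, m + d(\lambda) - (m\vee n))\bigr) = \mu(m, m + d(\mu) - (m\vee n))$ and likewise for $n$. Were the two segments of $\lambda$ equal, their images under $p$ would coincide, contradicting the choice of $\mu$; hence they differ, establishing no local periodicity at $v$. As $v \in \Lambda^0$ was arbitrary, $\Lambda$ is aperiodic. I expect the only delicate point to be the segment-intertwining identity of the second paragraph: one must confirm that applying $p$ to the defining factorisation of a segment lands in a valid factorisation of the image path, and then invoke uniqueness in $\Gamma$ to pin down the middle factor — this is precisely where both degree preservation and the uniqueness of factorisations are used. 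Everything else is bookkeeping around the definitions of $r$-path lifting and no local periodicity.
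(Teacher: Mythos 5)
Your proof is correct and follows the same route as the paper: use aperiodicity of $\Gamma$ at $p(v)$ to get a witnessing path, lift it via $r$-path lifting, and conclude that the lifted segments must differ. The only difference is that you make explicit the intertwining identity $p(\lambda(a,b)) = p(\lambda)(a,b)$ (via uniqueness of factorisations in $\Gamma$), which the paper's proof uses implicitly when it asserts the lifted path inherits the segment inequality.
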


\begin{proof}
Suppose that $\Gamma$ is aperiodic. Let $v \in \Lambda^{0}$ and $m\neq n\in\N^{k}$. Since $\Gamma$ is aperiodic, there exists $\lambda \in p(v) \Gamma$ with $d(\lambda)\geq m \vee n$ such that $\lambda(m,m+d(\lambda)-(m\vee n))\neq\lambda(n,n+d(\lambda)-(m\vee n))$. By $r$-path lifting there is $\lambda' \in v \Lambda$ with $p ( \lambda' ) = \lambda$ such that $d ( \lambda' ) \geq m \vee n$ and
\[
\lambda' (m,m+d(\lambda)-(m\vee n)) \neq \lambda'(n,n+d(\lambda)-(m\vee n)),
\]

\noindent
and so $\Lambda$ is aperiodic.
\end{proof}

\noindent
The converse of Theorem \ref{thm:lifts2} is false:

\begin{example} \label{ex:notconv}
The surjective $k$-graph morphism $p:\Delta_k \rightarrow T_k$ given by $p(m,m+e_i)=f_i$ for all $m \in \ZZ^k$ and $i=1,\ldots,k$ has $r$-path lifting. However by Examples~\ref{ex:aperiodex} we see that $\Delta_k \cong T_k \times_d \ZZ^k$ is aperiodic but $T_k$ is not.
\end{example}

\begin{cor} \label{cor:spg1}
Let $\Lambda$ be a row-finite $k$-graph with no sources, $\eta : \Lambda \to S$ a functor where $S$ is a semigroup and $\Lambda \times_\eta S$ the associated skew product graph. If $\Lambda$ is aperiodic then $\Lambda \times_\eta S$ is aperiodic.
\end{cor}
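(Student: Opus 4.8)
The plan is to recognise this as an immediate application of Theorem~\ref{thm:lifts2} to the canonical projection $\pi : \Lambda \times_\eta S \to \Lambda$ introduced in Examples~\ref{ex:pi}(i). In the notation of that theorem I would take the source $k$-graph to be $\Lambda \times_\eta S$, set $\Gamma = \Lambda$, and let $p = \pi$. The whole corollary then reduces to checking that the hypotheses of Theorem~\ref{thm:lifts2} are satisfied and quoting its conclusion.

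First I would verify the standing hypotheses: both $k$-graphs in Theorem~\ref{thm:lifts2} must be row-finite. Here $\Lambda$ is row-finite by assumption, and $\Lambda \times_\eta S$ is row-finite (indeed has no sources) by the Remark following the definition of the skew product. Next I would invoke Example~\ref{ex:rlift}, which records that $\pi$ has unique $r$-path lifting; in particular it has $r$-path lifting, which is exactly what Theorem~\ref{thm:lifts2} requires of the map $p$. Finally, the aperiodicity of $\Gamma = \Lambda$ is precisely the hypothesis of the corollary.

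With these three observations in hand, Theorem~\ref{thm:lifts2} applies directly and gives that $\Lambda \times_\eta S$ is aperiodic, as required. Since the genuine content -- the $r$-path lifting property of $\pi$ -- has already been established in Example~\ref{ex:rlift}, there is no real obstacle here: the proof is purely a matter of matching up the hypotheses of Theorem~\ref{thm:lifts2} with the data of the skew product. I would therefore expect the proof to consist of essentially one sentence invoking Theorem~\ref{thm:lifts2} together with Example~\ref{ex:rlift}.
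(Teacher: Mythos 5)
Your proposal is correct and is exactly the paper's argument: the authors also deduce the corollary by applying the lifting theorem for aperiodicity to the projection $\pi$ via Example~\ref{ex:rlift} (the paper's proof cites Theorem~\ref{thm:lifts}, but this is evidently a typo for Theorem~\ref{thm:lifts2}, which is the result you correctly invoke). No gaps.
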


\begin{proof}
Follows from Theorem \ref{thm:lifts} and Example \ref{ex:rlift}.
\end{proof}

\noindent In some cases the aperiodicity of a skew product graph $\Lambda \times_\eta S$ can be deduced directly from properties of $\eta$.

\begin{prop} \label{prop:semigp-aperiodic}
Suppose $S$ is a semigroup, $\Lambda$ is a row-finite $k$-graph, $\eta:\Lambda\rightarrow S$ is a functor, and there exists a map $\phi:S\rightarrow\ZZ^{k}$ such that $d = \phi \circ \eta$. Then $\Lambda\times_{\eta}S$ is aperiodic.
\end{prop}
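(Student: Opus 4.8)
The plan is to verify directly that every vertex of $\Lambda\times_\eta S$ has no local periodicity, and the crux will be that the $S$-coordinate alone already distinguishes the two restricted paths, so the underlying $\Lambda$-paths never need to be compared. First I would fix a vertex $(v,t)\in(\Lambda\times_\eta S)^0$ and degrees $m\neq n\in\NN^k$, noting that $m\vee n\neq 0$. Using that $\Lambda$ is row-finite with no sources, I would choose $\lambda\in v\Lambda^{m\vee n}$; then $(\lambda,t)\in(v,t)(\Lambda\times_\eta S)$ has degree $m\vee n$, meeting the requirement $d(\lambda,t)\ge m\vee n$ in the definition of no local periodicity.

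Next I would pin down how the restriction operation $(\lambda,t)(a,b)$ behaves in the skew product. Writing $\lambda=\lambda(0,a)\,\lambda(a,b)\,\lambda''$ and lifting this factorisation through the composition rule of $\Lambda\times_\eta S$, one reads off that the unique degree-$a$ initial segment of $(\lambda,t)$ is $(\lambda(0,a),t)$, whose source has $S$-coordinate $t\eta(\lambda(0,a))$. Hence $(\lambda,t)(a,b)=(\lambda(a,b),\,t\eta(\lambda(0,a)))$. Applying this with $a=m$ and with $a=n$ (and the appropriate upper endpoints $m+d(\lambda)-(m\vee n)$ and $n+d(\lambda)-(m\vee n)$), the two restricted paths have $S$-coordinates $t\eta(\lambda(0,m))$ and $t\eta(\lambda(0,n))$ respectively.

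The key step is to show these $S$-coordinates differ, and this is exactly where the hypotheses $d=\phi\circ\eta$ and cancellativity of $S$ enter. Since $\phi(\eta(\lambda(0,m)))=d(\lambda(0,m))=m$ and likewise $\phi(\eta(\lambda(0,n)))=n$, and $m\neq n$, we must have $\eta(\lambda(0,m))\neq\eta(\lambda(0,n))$. Left cancellativity of $S$ then gives $t\eta(\lambda(0,m))\neq t\eta(\lambda(0,n))$, so the two restrictions of $(\lambda,t)$ are distinct. As $(v,t)$, $m$ and $n$ were arbitrary, this shows $\Lambda\times_\eta S$ is aperiodic.

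I do not expect a serious obstacle; the argument is short once the $S$-coordinate formula for restrictions is set up correctly. The two points that need care are getting that formula right (so the second coordinate of a restriction is governed by the initial segment $\lambda(0,m)$ rather than by $\lambda(m,\cdot)$), and recognising that cancellativity is essential: without it, left multiplication by $t$ could collapse the distinct elements $\eta(\lambda(0,m))$ and $\eta(\lambda(0,n))$, since $\phi$ is only assumed to be a map and need not be a homomorphism.
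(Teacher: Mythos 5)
Your proof is correct and follows essentially the same route as the paper's: both reduce the question to the $S$-coordinate at positions $m$ and $n$, use $\phi\circ\eta=d$ to conclude $\eta(\lambda(0,m))\neq\eta(\lambda(0,n))$, and then deduce the restricted paths differ. Your explicit appeal to cancellativity of $S$ is a point the paper leaves implicit (it assumes all semigroups are cancellative at the outset), but it is the same argument.
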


\begin{proof}
Fix $(v,s)\in(\Lambda\times_{\eta}S)^{0}$ and $m \neq n \in \NN^{k}$. Let $\lambda \in (v,s)(\Lambda\times_{\eta}S)$ be such that $d(\lambda)\geq m \vee n$. Observe that $\lambda(m,m)=s(\lambda(0,m))$, $\lambda(m,m)$ is of the form $(w,s\eta(\lambda(0,m)))$ for some $w\in\Lambda^{0}$. Similarly, $\lambda(n,n)$ is of the form $(w^{\prime}, s\eta(\lambda(0,n)))$ for some $w^{\prime}\in \Lambda^{0}$.

We claim $\lambda(m,m)\neq\lambda(n,n)$: Suppose, by hypothesis, $\eta(\lambda(0,n))=\eta(\lambda(0,m))$. Then $n = \phi \circ \eta(\lambda(0,n)) = \phi \circ \eta(\lambda(0,m)) =m$, which provides a contradiction, and $m\neq n$. Then $\eta(\lambda(0,m))\neq\eta(\lambda(0,n))$, and so $\lambda(m,m) \neq \lambda(n,n)$, and hence $\lambda (m, m + d(\lambda) -(m\vee n)) \neq \lambda(n,n+d(\lambda)-(m\vee n))$.
\end{proof}

\begin{cor}
Suppose $\Lambda$ is a row-finite $k$-graph. Then $\Lambda\times_{d}\NN^k$ and $\Lambda\times_{d}\ZZ^k$ are aperiodic.
\end{cor}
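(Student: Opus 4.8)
The plan is to obtain both assertions directly from Proposition~\ref{prop:semigp-aperiodic} by exhibiting, in each case, a map $\phi$ that witnesses the factorisation hypothesis $d = \phi \circ \eta$. The entire content of the corollary is already packaged in that proposition; the only work is to identify the correct $\phi$ for the two specific choices of semigroup $S$ and functor $\eta = d$.

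For $\Lambda \times_d \NN^k$ I would take $S = \NN^k$ and $\eta = d$, and let $\phi : \NN^k \to \ZZ^k$ be the canonical inclusion. Regarding the degree functor $d$ as taking values in $\ZZ^k$ through $\NN^k \subseteq \ZZ^k$, one has $\phi \circ d = d$, so the hypothesis of Proposition~\ref{prop:semigp-aperiodic} is satisfied and $\Lambda \times_d \NN^k$ is aperiodic. For $\Lambda \times_d \ZZ^k$ I would take $S = \ZZ^k$ and $\eta = d$, where $d$ is now viewed as a functor into $\ZZ^k$ via the same inclusion; choosing $\phi = \id : \ZZ^k \to \ZZ^k$ gives $\phi \circ \eta = d$, and Proposition~\ref{prop:semigp-aperiodic} again applies.

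The remaining points are purely bookkeeping: one checks that $\NN^k$ and $\ZZ^k$ are cancellative semigroups with identity (the identity being $0$ in both cases, as required by the standing conventions in Section~\ref{sec:bg}), and that $d$ is a genuine functor into each of them, which is immediate since the degree map is a functor into $\NN^k$ and the inclusion $\NN^k \hookrightarrow \ZZ^k$ is a semigroup homomorphism. Consequently there is no real obstacle to overcome here — the substance of the argument lies entirely in Proposition~\ref{prop:semigp-aperiodic}, and the corollary is simply its application to the two canonical degree functors.
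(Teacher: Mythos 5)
Your proof is correct and is essentially identical to the paper's: the authors likewise dispose of both cases by applying Proposition~\ref{prop:semigp-aperiodic} with $\eta=d$ and $S=\NN^k$, $\ZZ^k$ respectively. You are merely more explicit in naming the witnessing maps $\phi$ (the inclusion $\NN^k\hookrightarrow\ZZ^k$ and the identity on $\ZZ^k$), which the paper leaves implicit.
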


\begin{proof}
Apply Proposition~\ref{prop:semigp-aperiodic} with $\eta=d$ and $S=\NN, \ZZ$ respectively.
\end{proof}


\section{Cofinality}

We will use the Lewin-Sims definition of cofinality,~\cite[Remark A.3]{LewinSims}:

\begin{dfn} A row-finite, $k$-graph $\Lambda$ with no sources is \textit{cofinal} if for all pairs $v,w\in\Lambda^{0}$ there exists $N\in\N^{k}$ such that $v\Lambda s(\alpha)\neq\emptyset$ for every $\alpha\in w\Lambda^{N}$.
\end{dfn}


\begin{lem} \label{lem:cofinalconn}
Let $\Lambda$ be a row-finite $k$-graph with no sources.
\begin{enumerate}[(a)]
\item If $\Lambda$ is cofinal then $\Lambda$ is connected.
\item Suppose that for all pairs $v,w\in\Lambda^{0}$ there exists $N\in\N^{k}$ such that $v\Lambda s(\alpha)\neq\emptyset$ for every $\alpha\in w\Lambda^{N}$. Then for $n \ge N$
  we have $v\Lambda s(\alpha)\neq\emptyset$ for every $\alpha\in w\Lambda^{n}$.
\end{enumerate}
\end{lem}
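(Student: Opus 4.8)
The plan is to handle the two parts separately, each being a direct consequence of the definitions together with the factorisation property.

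For part (a), I would show that cofinality forces every pair of vertices into the same equivalence class. Fix $v,w\in\Lambda^{0}$ and let $N\in\N^{k}$ be as supplied by cofinality. The first thing to record is that the hypothesis is not vacuous: since $\Lambda$ has no sources, $w\Lambda^{N}\neq\emptyset$, so I may fix some $\alpha\in w\Lambda^{N}$. By definition $r(\alpha)=w$, whence $\alpha\in w\Lambda s(\alpha)$ and so $w$ is equivalent to $s(\alpha)$; cofinality gives $v\Lambda s(\alpha)\neq\emptyset$, so $v$ is equivalent to $s(\alpha)$ as well. By symmetry and transitivity of the generated relation, $v$ is equivalent to $w$. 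As $v,w$ were arbitrary, the generated equivalence relation is all of $\Lambda^{0}\times\Lambda^{0}$, that is, $\Lambda$ is connected.

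For part (b), the idea is to reduce the case of degree $n\ge N$ to the hypothesised case of degree $N$ via the factorisation property. Fix $v,w$, let $N$ be as in the hypothesis, take $n\ge N$, and let $\alpha\in w\Lambda^{n}$. Since $0\le N\le d(\alpha)=n$, the factorisation property lets me write $\alpha=\beta\gamma$ with $\beta=\alpha(0,N)\in w\Lambda^{N}$ and $\gamma=\alpha(N,n)$, so that $r(\gamma)=s(\beta)$ and $s(\gamma)=s(\alpha)$. Applying the hypothesis to $\beta\in w\Lambda^{N}$ produces some $\mu\in v\Lambda s(\beta)$; since $s(\mu)=s(\beta)=r(\gamma)$, the composite $\mu\gamma$ is defined and lies in $v\Lambda s(\gamma)=v\Lambda s(\alpha)$. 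Hence $v\Lambda s(\alpha)\neq\emptyset$, as required.

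Neither part presents a genuine difficulty, so there is no single hard step to overcome; the arguments are short. The one point requiring care is in (a): the cofinality condition quantifies over $\alpha\in w\Lambda^{N}$, and one must invoke the no-sources hypothesis to guarantee that this set is nonempty, so that the conclusion $v\Lambda s(\alpha)\neq\emptyset$ is actually witnessed by a path joining $v$ to the source of a path out of $w$. The degenerate case $N=0$, where $w\Lambda^{0}=\{w\}$ and the condition reads $v\Lambda w\neq\emptyset$ directly, is subsumed by this observation. In (b) the only verification is that the splitting degrees satisfy $0\le N\le n$, which is precisely the assumption $n\ge N$.
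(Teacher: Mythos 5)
Your proposal is correct and follows essentially the same route as the paper: part (a) uses a path $\alpha$ out of $w$ (whose existence rests on the no-sources hypothesis) to link $v$ and $w$ through $s(\alpha)$ in the generated equivalence relation, and part (b) factors $\alpha=\alpha(0,N)\alpha(N,n)$ and composes the path supplied by the hypothesis with the tail $\alpha(N,n)$. Your explicit remark that no-sources is needed to make the cofinality condition non-vacuous is a point the paper leaves implicit, but the argument is the same.
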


\begin{proof}
Fix $v,w \in \Lambda^0$. If $\Lambda$ is cofinal it follows that there is $\alpha \in w \Lambda$ such that $w \Lambda s ( \alpha ) $ and $v \Lambda s ( \alpha )$ are non-empty. It then follows that $(v,w)$ belongs to the equivalence relation described in Section~\ref{ss:conn}. Since $v,w$ were arbitrary it follows that the equivalence relation is $\Lambda^0 \times \Lambda^0$ and so $\Lambda$ is connected.

Fix $v , w \in \Lambda^0$, then there is $N\in\N^{k}$ such that $v \Lambda s(\alpha)\neq\emptyset$ for every $\alpha\in w\Lambda^{N}$. Let $n \geq N$ and consider $\beta \in w \Lambda^n$ then $\beta' = \beta ( 0 , N ) \in w \Lambda^N$ and so by hypothesis there is $\lambda \in v \Lambda s ( \beta' )$. Then $\lambda \beta ( N , n ) \in v \Lambda s ( \beta )$ and the result follows.
\end{proof}

\begin{lem} \label{lem:skelcofinal}
Let $\Lambda$ be a row-finite $k$-graph with skeleton $E_\Lambda$. If $E_\Lambda$ is cofinal then $\Lambda$ is cofinal. Furthermore, $\Lambda$ is strongly connected if and only if $E_\Lambda$ strongly connected
\end{lem}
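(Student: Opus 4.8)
The plan rests on the correspondence between morphisms of $\Lambda$ and paths in the $1$-skeleton $E_\Lambda$. By the factorisation property, any $\mu\in\Lambda$ with $d(\mu)=n$ factors as a string of $|n|:=n_1+\cdots+n_k$ edges, yielding a path in $E_\Lambda$ of length $|n|$ with the same range and source; conversely any path $g_1\cdots g_\ell$ in $E_\Lambda$ of length $\ell$ composes (its consecutive edges match by the definition of a path) to a morphism of $\Lambda$ of degree $\sum_j d(g_j)$. In particular $x\Lambda y\neq\emptyset$ if and only if $xE_\Lambda y\neq\emptyset$, and morphisms of nonzero degree correspond to skeleton paths of length $\ge 1$. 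I will use throughout that, cofinality and strong connectivity being defined only for graphs with no sources, $\Lambda$ has no sources; this guarantees $v\Lambda^{e_i}\neq\emptyset$ for every $v\in\Lambda^{0}$ and every colour $i$, so $E_\Lambda$ is itself a row-finite $1$-graph with no sources.

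For the first assertion, fix $v,w\in\Lambda^{0}$. Viewing $E_\Lambda$ as a $1$-graph, cofinality of $E_\Lambda$ together with Lemma~\ref{lem:cofinalconn}(b) provides $L\in\N$ such that $vE_\Lambda s(\gamma)\neq\emptyset$ for every $\gamma\in wE_\Lambda^{\ell}$ with $\ell\ge L$. I then choose any $N\in\N^{k}$ with $n_1+\cdots+n_k\ge L$. Given $\beta\in w\Lambda^{N}$, factoring $\beta$ into edges produces $\tilde\beta\in wE_\Lambda^{|N|}$ with $s(\tilde\beta)=s(\beta)$ and $|N|\ge L$, so $vE_\Lambda s(\beta)\neq\emptyset$ and hence $v\Lambda s(\beta)\neq\emptyset$. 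As this holds for every $\beta\in w\Lambda^{N}$, the $k$-graph $\Lambda$ is cofinal.

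For the equivalence, the forward direction is immediate: if $\Lambda$ is strongly connected then for $u,v\in\Lambda^{0}$ there is $N>0$ with $u\Lambda^{N}v\neq\emptyset$, and factoring such a path gives a skeleton path of length $|N|\ge k\ge 1$ from $u$ to $v$, so $E_\Lambda$ is strongly connected. The reverse direction is where the work lies, since strong connectivity of $\Lambda$ demands a connecting path whose degree is strictly positive in every coordinate, whereas a single skeleton path only guarantees nonzero degree. To bridge this I fix $u,v$ and build an all-positive loop at $v$: for each colour $i$ choose $e_i'\in v\Lambda^{e_i}$ (using no sources), let $w_i=s(e_i')$, use strong connectivity of $E_\Lambda$ to obtain $\gamma_i\in w_i\Lambda v$ (taking $\gamma_i=v$ when $w_i=v$), and set $\ell_i=e_i'\gamma_i\in v\Lambda v$, so that $d(\ell_i)\ge e_i$. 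The composite $\ell=\ell_1\cdots\ell_k\in v\Lambda v$ then satisfies $d(\ell)\ge(1,\ldots,1)>0$. Finally, taking $\beta\in u\Lambda v$ (again from strong connectivity of $E_\Lambda$, with $\beta=v$ when $u=v$) gives $\beta\ell\in u\Lambda^{d(\beta)+d(\ell)}v$ with $d(\beta)+d(\ell)>0$, so $\Lambda$ is strongly connected.

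The main obstacle is precisely this reverse implication: the all-coordinates-positive requirement in the definition of strong connectivity is strictly stronger than what one skeleton path delivers, and the no-sources hypothesis is exactly what lets me manufacture edges of every colour and assemble a genuinely positive-degree loop. The cofinality half, by contrast, reduces to comparing the length $|N|$ of a factored path against the threshold $L$ furnished by the $1$-graph instance of Lemma~\ref{lem:cofinalconn}(b).
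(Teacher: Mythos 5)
Your proof is correct and follows essentially the same route as the paper: morphisms of $\Lambda$ correspond to paths in $E_\Lambda$, cofinality transfers by choosing $N$ so that $\sum_i N_i$ matches the skeleton's cofinality bound, and the hard direction of strong connectivity uses the no-sources hypothesis to manufacture edges of every colour. The only cosmetic differences are that you invoke Lemma~\ref{lem:cofinalconn}(b) to allow $\sum_i N_i \ge n$ where the paper simply takes $\sum_i N_i = n$ exactly, and that you assemble an all-positive-degree loop at $v$ from $k$ single-coloured excursions where the paper prepends a single length-$k$ path using one edge of each colour; both constructions accomplish the same thing.
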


\begin{proof}
Fix $v,w \in \Lambda^0 = E_\Lambda^0$. Since $E_\Lambda$ is cofinal there is $n \in \NN$ such that $v E_\Lambda s ( \alpha ) \neq \emptyset$ for all $\alpha \in w E_\Lambda^n$. Let $N \in \NN^k$ be such that
$\sum_{i=1}^k N_i=n$. Then for all $\alpha' \in w \Lambda^N$ we have $\alpha' \in E_\Lambda^n$ and so $v \Lambda^N s ( \alpha' ) \neq \emptyset$.

Suppose that $\Lambda$ is strongly connected and $v,w \in E_\Lambda^0 = \Lambda^0$. Since $\Lambda$ is strongly connected there is $\alpha \in v \Lambda w$ with $d ( \alpha ) > 0$. Let $n = \sum_{i=1}^n d(\alpha)_i$ then $n > 0$ and $v E_\Lambda w \neq \emptyset$, so $E_\Lambda$ is strongly connected.
Suppose that $E_\Lambda$ is strongly connected, and $v , w \in \Lambda^0 = E_\Lambda^0$. Since $\Lambda$ has no sources, there is a path $\alpha \in v E_\Lambda^k$ which uses an edge of each of the $k$-colours. Let $u = s ( \alpha )$. Since $E_\Lambda$ is strongly connected there is $\beta \in u E_\Lambda^n w$.
Let $\lambda$ be the element of $\Lambda$ which may be represented by $\alpha \beta \in E_\Lambda$. Then $\lambda \in v \Lambda w$ and $d ( \lambda ) > 0$ and so $\Lambda$ is strongly connected.
\end{proof}

\begin{rmk}
The converse to the first part of Lemma~\ref{lem:skelcofinal} is not true: Let $\Lambda$ be the $2$-graph which is completely determined by its $1$-skeleton as shown:
\[
\begin{tikzpicture}[scale=0.28]
          \node[circle,inner sep=0pt] at (-0.9, 4) {$w$};
				 \node[circle,inner sep=0pt] at (3.1, 0) {$v$};

          \node[circle,inner sep=0pt] (p12) at (0, 4) {$\bullet$};

          \node[circle,inner sep=0pt] (p13) at (0, 8) {$\bullet$};

          \node[circle,inner sep=0pt] (p14) at (0, 12) {$\bullet$};

           \draw[style=semithick, dashed, -latex] (p13.south)--(p12.north);
           \draw[style=semithick, dashed, -latex] (p14.south)--(p13.north);
          \node[circle,inner sep=0pt] (p21) at (4, 0) {$\bullet$};

          \node[circle,inner sep=0pt] (p22) at (4, 4) {$\bullet$};

          \node[circle,inner sep=0pt] (p23) at (4, 8) {$\bullet$};

          \node[circle,inner sep=0pt] (p24) at (4, 12) {$\bullet$};

           \draw[style=semithick, dashed, -latex] (p22.south)--(p21.north);
           \draw[style=semithick, dashed, -latex] (p23.south)--(p22.north);
           \draw[style=semithick, dashed, -latex] (p24.south)--(p23.north);
%
          \draw[style=semithick, -latex] (p22.west)--(p12.east);
          \draw[style=semithick, -latex] (p23.west)--(p13.east);
          \draw[style=semithick, -latex] (p24.west)--(p14.east);
          \node[circle,inner sep=0pt] (p31) at (8, 0) {$\bullet$};

          \node[circle,inner sep=0pt] (p32) at (8, 4) {$\bullet$};

          \node[circle,inner sep=0pt] (p33) at (8, 8) {$\bullet$};

          \node[circle,inner sep=0pt] (p34) at (8, 12) {$\bullet$};

           \draw[style=semithick, dashed, -latex] (p32.south)--(p31.north);
           \draw[style=semithick, dashed, -latex] (p33.south)--(p32.north);
           \draw[style=semithick, dashed, -latex] (p34.south)--(p33.north);
           \draw[style=semithick, -latex] (p31.west)--(p21.east);
          \draw[style=semithick, -latex] (p32.west)--(p22.east);
          \draw[style=semithick, -latex] (p33.west)--(p23.east);
          \draw[style=semithick, -latex] (p34.west)--(p24.east);
          \node[circle,inner sep=0pt] (p41) at (12, 0) {$\bullet$};

          \node[circle,inner sep=0pt] (p42) at (12, 4) {$\bullet$};

          \node[circle,inner sep=0pt] (p43) at (12, 8) {$\bullet$};

          \node[circle,inner sep=0pt] (p44) at (12, 12) {$\bullet$};

           \draw[style=semithick, dashed, -latex] (p42.south)--(p41.north);
           \draw[style=semithick, dashed, -latex] (p43.south)--(p42.north);
           \draw[style=semithick, dashed, -latex] (p44.south)--(p43.north);
           \draw[style=semithick, -latex] (p41.west)--(p31.east);
          \draw[style=semithick, -latex] (p42.west)--(p32.east);
          \draw[style=semithick, -latex] (p43.west)--(p33.east);
          \draw[style=semithick, -latex] (p44.west)--(p34.east);
          \node[circle,inner sep=0pt] (p51) at (16, 0) {$\bullet$};

          \node[circle,inner sep=0pt] (p52) at (16, 4) {$\bullet$};

          \node[circle,inner sep=0pt] (p53) at (16, 8) {$\bullet$};

          \node[circle,inner sep=0pt] (p54) at (16, 12) {$\bullet$};

           \draw[style=semithick, dashed, -latex] (p52.south)--(p51.north);
           \draw[style=semithick, dashed, -latex] (p53.south)--(p52.north);
           \draw[style=semithick, dashed, -latex] (p54.south)--(p53.north);
           \draw[style=semithick, -latex] (p51.west)--(p41.east);
          \draw[style=semithick, -latex] (p52.west)--(p42.east);
          \draw[style=semithick, -latex] (p53.west)--(p43.east);
          \draw[style=semithick, -latex] (p54.west)--(p44.east);
          \node[circle,inner sep=0pt] (p61) at (20, 0) {$\bullet$};

          \node[circle,inner sep=0pt] (p62) at (20, 4) {$\bullet$};

          \node[circle,inner sep=0pt] (p63) at (20, 8) {$\bullet$};

          \node[circle,inner sep=0pt] (p64) at (20, 12) {$\bullet$};

           \draw[style=semithick, dashed, -latex] (p62.south)--(p61.north);
           \draw[style=semithick, dashed, -latex] (p63.south)--(p62.north);
           \draw[style=semithick, dashed, -latex] (p64.south)--(p63.north);
           \draw[style=semithick, -latex] (p61.west)--(p51.east);
          \draw[style=semithick, -latex] (p62.west)--(p52.east);
          \draw[style=semithick, -latex] (p63.west)--(p53.east);
          \draw[style=semithick, -latex] (p64.west)--(p54.east);

           \node at (21,0) {.}; \node at (21.5,0) {.}; \node at (22,0) {.};
          \node at (21,4) {.}; \node at (21.5,4) {.}; \node at (22,4) {.};
          \node at (21,8) {.}; \node at (21.5,8) {.}; \node at (22,8) {.};
          \node at (21,12) {.}; \node at (21.5,12) {.}; \node at (22,12) {.};
           \node at (0,13) {.}; \node at (0,13.5) {.}; \node at (0,14) {.};
           \node at (4,13) {.}; \node at (4,13.5) {.}; \node at (4,14) {.};
           \node at (8,13) {.}; \node at (8,13.5) {.}; \node at (8,14) {.};
           \node at (12,13) {.}; \node at (12,13.5) {.}; \node at (12,14) {.};
          \node at (16,13) {.}; \node at (16,13.5) {.}; \node at (16,14) {.};
          \node at (20,13) {.}; \node at (20,13.5) {.}; \node at (20,14) {.};

\end{tikzpicture}
\]

\noindent Then $\Lambda$ is cofinal: For example for $v,w$ as shown, $N=(1,0)$ will suffice. However $E_\Lambda$ is not cofinal: For example for $v,w$ as shown, for any $n \ge 0$ the vertex which is the source of the vertical path of length $n$ with range $w$ does not connect to $v$.
\end{rmk}

\noindent The following proposition establishes a link between cofinality and strongly connectivity for a row-finite $k$-graph.

\begin{prop} \label{prop:cofinal-SC}
Suppose $\Lambda$ is a row-finite $k$-graph with no sources.
\begin{enumerate}
\item If $\Lambda$ is strongly connected then $\Lambda$ is cofinal.
\item If $\Lambda$ is cofinal, has no sinks and $\Lambda^0$ finite then $\Lambda$ is strongly connected.
\end{enumerate}
\end{prop}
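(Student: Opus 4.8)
The plan is to treat the two implications separately, with (1) being essentially immediate from the definition of strong connectivity, and (2) requiring the loop-producing Lemma~\ref{lemma:cofinal-loops} together with cofinality and Lemma~\ref{lem:cofinalconn}(b).

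For (1), the point is that strong connectivity supplies directed paths between \emph{every} ordered pair of vertices. Fix $v,w\in\Lambda^0$. For any $N\in\N^k$ and any $\alpha\in w\Lambda^N$, the source $s(\alpha)$ is some vertex $x\in\Lambda^0$, and applying strong connectivity to the pair $(v,x)$ produces a path with range $v$ and source $x$, so that $v\Lambda s(\alpha)=v\Lambda x\neq\emptyset$. Thus the defining condition for cofinality holds for \emph{every} choice of $N$; in particular it holds with $N=0$, the only element of $w\Lambda^0$ being $w$ itself, for which $v\Lambda w\neq\emptyset$. Hence $\Lambda$ is cofinal.

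For (2), fix $u,v\in\Lambda^0$; the goal is to produce $M>0$ with $u\Lambda^M v\neq\emptyset$. First I would invoke Lemma~\ref{lemma:cofinal-loops} — this is where the hypotheses ``no sinks'' and ``$\Lambda^0$ finite'' enter — to obtain a vertex $w$, a loop $\alpha\in w\Lambda w$ with $d(\alpha)>0$, and a path $p_2\in w\Lambda v$. Since $d(\alpha)>0$ has all coordinates strictly positive, the powers $\alpha^m\in w\Lambda^{m\,d(\alpha)}w$ furnish loops at $w$ of arbitrarily large degree, all with source $w$.

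The crux is to connect $u$ to $w$. Applying cofinality to the pair $(u,w)$ yields $N\in\N^k$ with $u\Lambda s(\beta)\neq\emptyset$ for every $\beta\in w\Lambda^N$, and by Lemma~\ref{lem:cofinalconn}(b) the same holds for every $\beta\in w\Lambda^n$ with $n\geq N$. Choosing $m$ so large that $m\,d(\alpha)\geq N$ — possible precisely because $d(\alpha)>0$ — the loop $\beta=\alpha^m$ satisfies $s(\beta)=w$, whence $u\Lambda w\neq\emptyset$; pick $p_1\in u\Lambda w$. Finally the composite $p_1\alpha p_2$ lies in $u\Lambda v$ and has degree $M=d(p_1)+d(\alpha)+d(p_2)\geq d(\alpha)>0$, so $u\Lambda^M v\neq\emptyset$ with $M>0$. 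As $u,v$ were arbitrary, $\Lambda$ is strongly connected. I expect the bridging step to be the main obstacle: cofinality only guarantees reachability of the sources of paths emanating from $w$, so one needs a device that keeps the source pinned at $w$ while the degree grows past the cofinality threshold $N$, and the positive-degree loop of Lemma~\ref{lemma:cofinal-loops} provides exactly this.
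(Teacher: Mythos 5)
Your proposal is correct and follows essentially the same route as the paper: part (1) is the same direct observation that strong connectivity gives $v\Lambda s(\alpha)\neq\emptyset$ for any choice of $N$, and part (2) uses Lemma~\ref{lemma:cofinal-loops} to produce the positive-degree loop at $w$, powers of that loop together with Lemma~\ref{lem:cofinalconn}(b) to push past the cofinality threshold $N$ and reach $w$ from $u$, and then the composite $p_1\alpha p_2\in u\Lambda v$, exactly as in the paper's $\beta\alpha\alpha'$.
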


\begin{proof}
Suppose $\Lambda$ is strongly connected. Fix $v,w \in \Lambda^0$ then for $N=e_1$ we have $v \Lambda s ( \alpha ) \neq \emptyset$ for all $\alpha \in w \Lambda^N$ since $\Lambda$ is strongly connected, and so $\Lambda$ is cofinal.

Suppose $\Lambda$ is cofinal. Fix $u,v\in\Lambda^{0}$. Then by Lemma \ref{lemma:cofinal-loops}, there exists $w\in\Lambda^{0}$ and $\alpha\in w\Lambda w$ such that $d(\alpha)>0$ and $w\Lambda v\neq\emptyset$. Let $\alpha^{\prime}\in w\Lambda v$. Given $u,w\in\Lambda^{0}$, since $\Lambda$ is cofinal and has no sources, by Lemma \ref{lem:cofinalconn}(ii) there exists $N\in\N^{k}$ such that for all $n\geq N$ and all $\alpha^{\prime\prime}\in w\Lambda^{n}$, there exists $\beta\in u\Lambda s(\alpha^{\prime\prime})$. Since $d ( \alpha ) > 0$ we may choose $t\in\N$ such that $td(\alpha)>N$. Then $\alpha^t \in w\Lambda^{n}$ where $n>N$, and so by cofinality of $\Lambda$ exists $\beta\in u\Lambda s(\alpha^t)=u\Lambda w$. Hence $\beta\alpha\alpha^{\prime}\in u\Lambda v$ with $d(\beta\alpha\alpha^{\prime})>d(\alpha)>0$ and so $\Lambda$ is strongly connected.
\end{proof}

\begin{example}
The condition that $\Lambda^0$ is finite in Proposition~\ref{prop:cofinal-SC}(2) is essential: For instance $\Delta_k$ is cofinal by Lemma~\ref{lem:skelcofinal} since its skeleton is cofinal; however it is not strongly connected by Lemma~\ref{lem:skelcofinal} since its skeleton is not strongly connected.
\end{example}

Since the map $\pi : \Lambda \times_\eta S \to \Lambda$ has unique $r$-path lifting, we wish to know if we can deduce the cofinality of $\Lambda \times_\eta S$ from that of $\Lambda$. By Theorem~\ref{thm:lifts} the image of a cofinal $k$-graph under a map with $r$-path lifting is cofinal, however Example~\ref{ex:spgnotconn} shows that the converse is not true. For a cofinal $k$-graph $\Lambda$, we must then seek additional conditions on the functor $\eta$ which guarantees that $\Lambda \times_\eta S$ is cofinal. In Definition \ref{def:etacofinal} we introduce the notion of $( \Lambda , S, \eta)$ cofinality to address this problem.

\begin{thm} \label{thm:lifts}
Suppose $\Lambda, \Gamma$ be row-finite $k$-graphs and $p : \Lambda \to \Gamma$ have $r$-path lifting. If $\Lambda$ is cofinal then $\Gamma$ is cofinal.
\end{thm}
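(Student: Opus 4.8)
The plan is to verify the defining cofinality condition for $\Gamma$ directly, by transporting witnesses across $p$. Since $p$ is a surjective $k$-graph morphism it is in particular surjective on vertices, so fix a pair $\bar v , \bar w \in \Gamma^0$ and choose lifts $v , w \in \Lambda^0$ with $p(v) = \bar v$ and $p(w) = \bar w$. Applying cofinality of $\Lambda$ to the pair $v,w$ produces $N \in \N^k$ such that $v \Lambda s(\alpha) \neq \emptyset$ for every $\alpha \in w \Lambda^N$. I would then claim that this same $N$ witnesses cofinality of $\Gamma$ for the pair $\bar v , \bar w$.

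To establish the claim, fix an arbitrary $\bar\alpha \in \bar w \Gamma^N$. Then $r(\bar\alpha) = \bar w = p(w)$, so $\bar\alpha \in p(w) \Gamma$ and $r$-path lifting supplies some $\alpha' \in w \Lambda$ with $p(\alpha') = \bar\alpha$. Because $p$ is degree preserving, $d(\alpha') = d(\bar\alpha) = N$, so in fact $\alpha' \in w \Lambda^N$; cofinality of $\Lambda$ then yields some $\beta \in v \Lambda s(\alpha')$.

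The final step is to push $\beta$ forward: setting $\bar\beta = p(\beta)$ and using that $p$ intertwines range and source, we get $r(\bar\beta) = p(r(\beta)) = p(v) = \bar v$ and $s(\bar\beta) = p(s(\beta)) = p(s(\alpha')) = s(p(\alpha')) = s(\bar\alpha)$. Hence $\bar\beta \in \bar v \Gamma s(\bar\alpha)$, so $\bar v \Gamma s(\bar\alpha) \neq \emptyset$; since $\bar\alpha \in \bar w \Gamma^N$ was arbitrary, $\Gamma$ is cofinal for the pair $\bar v, \bar w$, and as this pair was arbitrary, $\Gamma$ is cofinal.

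I expect no serious obstacle: the argument is essentially bookkeeping, following the range and source maps through the functor $p$. The two points requiring a little care are, first, that $r$-path lifting only delivers a lift of \emph{a priori} unspecified degree in $w\Lambda$, so one must invoke degree-preservation to land in $w\Lambda^N$ and thereby reuse the very same $N$; and second, that cofinality is defined only for $k$-graphs with no sources, so strictly one should begin by noting that $\Gamma$ has no sources. This last point follows by the same forward-pushing idea: for nonzero $n \in \N^k$ and $\bar v = p(v)$, the set $v\Lambda^n$ is nonempty (as $\Lambda$ has no sources), and applying $p$ shows $\bar v \Gamma^n \neq \emptyset$.
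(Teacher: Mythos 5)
Your proposal is correct and follows essentially the same argument as the paper: lift the two vertices, apply cofinality of $\Lambda$ to obtain $N$, lift an arbitrary $\bar\alpha \in \bar w\Gamma^N$ via $r$-path lifting (using degree preservation to land in $w\Lambda^N$), and push the resulting connecting path forward through $p$. Your added remarks about degree preservation and $\Gamma$ having no sources are sensible points of care that the paper leaves implicit.
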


\begin{proof}
Suppose that $\Lambda$ is cofinal. Fix $v,w \in \Gamma^0$. Let $v',w' \in \Lambda^0$ be such that $p (v' ) = v$ and $p ( w' ) = w$. Since $\Lambda$ is cofinal there is an $N$ such that for all $\alpha' \in w' \Lambda^N$ there is $\beta' \in v' \Lambda s( \alpha' )$. Then for $\alpha \in v \Gamma^N$ there is $\alpha' \in v' \Lambda^N$ with $p ( \alpha' )= \alpha$. By hypothesis there is $\beta' \in v' \Lambda s ( \alpha' )$, and so $\beta = p ( \beta' )$ is such that $s ( \beta ) = s ( \alpha )$ and $r ( \beta ) = v$ which implies that $v \Lambda s ( \alpha ) \neq \emptyset$ as required.
\end{proof}

\begin{cor} \label{cor:spg2}
Let $\Lambda$ be a row-finite $k$-graph with no sources, $\eta : \Lambda \to S$ a functor where $S$ is a semigroup and $\Lambda \times_\eta S$ the associated skew product graph. If $\Lambda \times_\eta S$ is cofinal then $\Lambda$ is cofinal.
\end{cor}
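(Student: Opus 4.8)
The plan is to invoke Theorem~\ref{thm:lifts} directly, using the canonical projection as the map with $r$-path lifting. First I would recall from Examples~\ref{ex:pi}(i) and Example~\ref{ex:rlift} that the projection $\pi : \Lambda \times_\eta S \to \Lambda$, given by $\pi(\lambda, s) = \lambda$, is a surjective $k$-graph morphism with unique $r$-path lifting; in particular it has $r$-path lifting in the sense of Definition~\ref{dfn:path-lift}. I would also note that, since $\Lambda$ is row-finite with no sources, the remark following the definition of the skew product guarantees that $\Lambda \times_\eta S$ is again a row-finite $k$-graph (with no sources), so that both $\Lambda \times_\eta S$ and $\Lambda$ satisfy the standing hypotheses of Theorem~\ref{thm:lifts}.

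With these observations in place, I would apply Theorem~\ref{thm:lifts} taking the source $k$-graph to be $\Lambda \times_\eta S$, the target $k$-graph to be $\Lambda$, and the map $p = \pi$. Since $\Lambda \times_\eta S$ is assumed cofinal and $\pi$ has $r$-path lifting, the theorem yields that the target $\Lambda$ is cofinal, which is precisely the desired conclusion.

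I do not expect any genuine obstacle here: the entire content is carried by Theorem~\ref{thm:lifts}, and the only thing to verify is that the projection $\pi$ meets its hypotheses, which is exactly what Example~\ref{ex:rlift} records. This is the cofinality analogue of Corollary~\ref{cor:spg1}, with Theorem~\ref{thm:lifts} now playing the role that Theorem~\ref{thm:lifts2} plays in the aperiodicity case; accordingly the proof reduces to the single line ``follows from Theorem~\ref{thm:lifts} and Example~\ref{ex:rlift}.''
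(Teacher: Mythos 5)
Your proof is correct and is exactly the argument the paper intends: the corollary is stated immediately after Theorem~\ref{thm:lifts} precisely so that it follows by applying that theorem to the projection $\pi : \Lambda \times_\eta S \to \Lambda$, which has $r$-path lifting by Example~\ref{ex:rlift}. Nothing further is needed.
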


\noindent
The converse of Theorem~\ref{thm:lifts} is false:

\begin{example} \label{ex:spgnotconn}
Consider the following $2$-graph $\Lambda$ with $1$-skeleton
\begin{equation*}
\begin{tikzpicture}
\def\vertex(#1) at (#2,#3){
    \node[inner sep=0pt, circle, fill=black] (#1) at (#2,#3)
    [draw] {.};
  }

  \vertex(vertA) at (-3,0) \node[inner sep=2pt, anchor = north] at (vertA.south) at (-3,-0.15) {$u$};
  \vertex(vertB) at (0,0) \node[inner sep=2pt, anchor = north] at (vertB.south) at (0,-0.15) {$v$};
  \vertex(vertC) at (3,0) \node[inner sep=2pt, anchor = north] at (vertC.south) at (3,-0.15) {$w$};

  \draw[style=semithick,-latex] (vertA.north west) parabola[parabola height=0.3cm] (vertB.north east);
  \node[inner sep=1pt, anchor = south] at (-1.5,0.35) {$e$};
  \draw[style=semithick,-latex] (vertB.south east) parabola[parabola height=-0.3cm] (vertA.south west);
  \node[inner sep=1pt, anchor = south] at (-1.5,-0.6) {$f$};
  \draw[style=semithick,-latex] (vertB.north west) parabola[parabola height=0.3cm] (vertC.north east);
  \node[inner sep=1pt, anchor = south] at (1.5,0.35) {$g$};
  \draw[style=semithick,-latex] (vertC.south east) parabola[parabola height=-0.3cm] (vertB.south west);
  \node[inner sep=1pt, anchor = south] at (1.5,-0.7) {$h$};
  \draw[style=semithick, style=dashed, -latex] (vertA.north east) parabola[parabola height=1.5cm] (vertC.north west);
  \node[inner sep=2pt, anchor = south] at (0,1.6) {$a$};
  \draw[style=semithick, style=dashed, -latex] (vertC.south west) parabola[parabola height=-1.5cm] (vertA.south east);
  \node[inner sep=2pt, anchor = south] at (0,-1.6) {$b$};
  \draw[style=semithick, style=dashed, -latex] (vertA.north west) .. controls (-5,1.5) and (-5,-1.5) .. (vertA.south west); \node[inner sep=2pt, anchor = south] at (-4.8,0) {$c$};
  \draw[style=semithick, style=dashed, -latex] (vertC.north east) .. controls (5,1.5) and (5,-1.5) .. (vertC.south east); \node[inner sep=2pt, anchor = south] at (4.8,0) {$d$};
  \draw[style=semithick, style=dashed, -latex] (vertB.north west) .. controls (1.5,1.5) and (-1.5,1.5) .. (vertB.north east);
  \node[inner sep=2pt, anchor = south] at (0.7,0.8) {$t_1$};
  \draw[style=semithick, style=dashed, -latex] (vertB.south west) .. controls (1.5,-1.5) and (-1.5,-1.5) .. (vertB.south east);
  \node[inner sep=2pt, anchor = south] at (0.7,-0.8) {$t_2$};
\end{tikzpicture}
\end{equation*}

\noindent and factorisation rules: $ec=t_1e$ and $ha=t_2e$ for paths from $u$ to $v$; $cf=ft_1$ and $bg=ft_2$ for paths from $v$ to $u$. Also $hd=t_1h$ and $eb=t_2h$ for paths from $w$ to $v$; $dg=gt_1$ and $af=gt_2$ for paths from $v$ to $w$. By Lemma~\ref{lem:skelcofinal} $\Lambda$ is strongly connected as its skeleton is strongly connected. Note there are no paths of degree $e_1+e_2$ from a vertex to itself.

Since $M_{1}=\left(\begin{smallmatrix} 0 & 1 & 0 \\ 1 & 0 & 1 \\ 0 & 1 & 0 \\\end{smallmatrix}\right)$ and $M_{2}=\left(\begin{smallmatrix} 1 & 0 & 1 \\ 0 & 2 & 0 \\ 1 & 0 & 1 \\\end{smallmatrix}\right)$, we calculate that $M^{(2j_1,j_2)} = 2^{j_1+j_2-1} M_2$ and $M^{(2j_1+1,j_2)} = 2^{j_1+j_2+1} M_1$. Hence $M^{(2j-1,2j-1)}=\left(\begin{smallmatrix} 0 & 4j & 0 \\ 4j & 0 & 4j \\ 0 & 4j & 0 \end{smallmatrix}\right)$ and
$M^{(2j,2j)}=\left(\begin{smallmatrix} 4j & 0 & 4j \\ 0 & 8j & 0 \\ 4j & 0 & 4j \end{smallmatrix}\right)$. In particular by Lemma~\ref{lem:matrixcond}~(b) $\Lambda$ is not primitive, even though it is strongly connected.

We claim that the skew product graph $\Lambda \times_d \ZZ^2$ is not cofinal. Consider $v_1 = ( v , (m ,n) )$ and $v_2 = ( v , (m+1,n))$ in $(\Lambda\times_d\ZZ^2)^{0}$. We claim that for all $N \in \NN^2$, for all $\alpha \in v_1 (\Lambda \times_d \ZZ^2)^N$, we have $v_2 (\Lambda \times_d \ZZ^2) s ( \alpha ) \neq \emptyset$. Let $N = (N_1, N_2)$. Suppose $N_1$ is even. Then for all $\alpha \in v_1 (\Lambda \times_d \ZZ^2)^N$, $s ( \alpha ) = ( v , (m + N_1 , n + N_2) )$. In order for this vertex to connect to $( v , ( m+1 , n ) )$, we have $M^{(N_1 - 1 , N_2 )} ( v,v ) \neq 0$. But $N_1 - 1$ is odd, and this matrix entry is zero. If $N_1$ is odd, then $s(\alpha)=(u,(m+N_1,n+N_2))$ or $s(\alpha)=(w,(m+N_1,n+N_2))$. In order for either of these vertices to connect to $(v,(m+1,n))$, we must have $M^{(N_1-1,N_2)}(u,v)\neq0$, or $M^{(N_1-1,N_2)}(w,v)\neq0$. But $N_1-1$ is even, and so both of these matrix entries are zero. Hence $\Lambda\times_d\ZZ^2$ is not cofinal, even though $\Lambda$ is cofinal.
\end{example}

\noindent To establish a sufficient condition for $\Lambda\times_{\eta}S$ to be cofinal, we need $\Lambda$ to be cofinal and an additional condition on $\eta$.

\begin{dfn} \label{def:etacofinal}
Let $\Lambda$ be a $k$-graph with no sources and $\eta : \Lambda \to S$ a functor, where $S$ is a semigroup. The system $( \Lambda , S , \eta )$ is \textit{cofinal} if for all $v,w\in\Lambda^{0}$, $a,b\in S$, there exists $N \in\N^{k}$ such that for all $\alpha\in w\Lambda^{N}$, there exists $\beta\in v\Lambda s(\alpha)$ such that $a\eta(\beta)=b\eta(\alpha)$.
\end{dfn}

\begin{prop} \label{prop:eta-cofinal}
Let $\Lambda$ be a $k$-graph with no sources and $\eta : \Lambda \to S$ a functor, where $S$ is a semigroup and $\Lambda \times_\eta S$ the associated skew product graph. Then the system $(\Lambda,S,\eta)$ is cofinal if and only if $\Lambda\times_{\eta} S$ is cofinal.
\end{prop}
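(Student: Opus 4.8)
The plan is to prove the equivalence by directly unwinding both notions of cofinality and observing that, once the structure maps of the skew product are made explicit, the two conditions are word-for-word the same. Recall first that, since $\Lambda$ is row-finite with no sources, the Remark following the definition of the skew product guarantees that $\Lambda \times_\eta S$ is also row-finite with no sources, so that the definition of cofinality applies to it. Both sides of the equivalence therefore quantify over the same kind of data, and the work is simply to translate the combinatorics of paths in $\Lambda \times_\eta S$ back into statements about paths in $\Lambda$ together with the semigroup labels supplied by $\eta$.

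First I would make the vertices and paths of $\Lambda \times_\eta S$ explicit. A vertex is a pair $(v,a)$ with $v \in \Lambda^0$ and $a \in S$, so the pairs of vertices appearing in the definition of cofinality of $\Lambda \times_\eta S$ correspond exactly to the data $v,w \in \Lambda^0$, $a,b \in S$ appearing in Definition~\ref{def:etacofinal}. Next, using \eqref{eq:rsdef}, a path with range $(w,b)$ and degree $N$ is precisely one of the form $(\alpha,b)$ with $\alpha \in w\Lambda^N$, and its source is $s(\alpha,b)=(s(\alpha),b\eta(\alpha))$. This gives a bijection $\alpha \mapsto (\alpha,b)$ between $w\Lambda^N$ and $(w,b)(\Lambda\times_\eta S)^N$.

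Then I would unwind the condition $(v,a)(\Lambda\times_\eta S)s(\alpha,b)\neq\emptyset$. A morphism realising it must be of the form $(\beta,a)$ with $r(\beta)=v$, and the requirement that its source equal $(s(\alpha),b\eta(\alpha))$ forces $s(\beta)=s(\alpha)$ together with $a\eta(\beta)=b\eta(\alpha)$. Thus $(v,a)(\Lambda\times_\eta S)s(\alpha,b)\neq\emptyset$ holds if and only if there exists $\beta \in v\Lambda s(\alpha)$ with $a\eta(\beta)=b\eta(\alpha)$.

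Combining these translations, cofinality of $\Lambda\times_\eta S$ asserts exactly that for all $v,w\in\Lambda^0$ and $a,b\in S$ there is $N\in\N^k$ such that every $\alpha\in w\Lambda^N$ admits some $\beta\in v\Lambda s(\alpha)$ with $a\eta(\beta)=b\eta(\alpha)$, which is verbatim the statement that $(\Lambda,S,\eta)$ is cofinal; both implications then follow at once. The only real obstacle is bookkeeping: one must be careful to apply the twisted source map $s(\lambda,t)=(s(\lambda),t\eta(\lambda))$ rather than a naive product coordinate, and to check that the quantifiers over $N$, over $\alpha$, and over $\beta$ line up on the two sides. No genuinely new idea beyond the definitions is needed.
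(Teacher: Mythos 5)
Your proof is correct and follows essentially the same route as the paper's: both directions are obtained by directly unwinding the definition of cofinality for $\Lambda\times_\eta S$ via the identification of $(w,b)(\Lambda\times_\eta S)^N$ with $w\Lambda^N$ and the twisted source map $s(\alpha,b)=(s(\alpha),b\eta(\alpha))$. Your write-up is in fact slightly more careful than the paper's (which contains a typo, asserting $\beta\in w\Lambda^N$ where it should read $\beta\in v\Lambda s(\alpha)$), but no new idea is involved.
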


\begin{proof}
Suppose $\Lambda\times_{\eta}S$ is cofinal. Fix $a,b \in S$ and $v,w \in \Lambda^0$. By hypothesis there is $N \in\N^{k}$ such that $(v,a)(\Lambda\times_{\eta}S)s(\alpha,b)$ is non-empty for every $(\alpha,b)\in(w,b)(\Lambda\times_{\eta}S)^{N}$. In particular for all $\alpha \in w \Lambda^N$ there exists $\beta \in w \Lambda^N$ such that $a\eta(\beta)=b\eta(\alpha)$, and so $( \Lambda , S , \eta )$ is cofinal.

Now suppose $(\Lambda,S,\eta)$ is cofinal. Fix $(v,a) , (w,b ) \in ( \Lambda \times_\eta S )^0$. By hypothesis there exists $N\in\N^{k}$ such that for all $\alpha\in w\Lambda^{N}$, there exists $\beta\in v\Lambda s(\alpha)$ with $a\eta(\beta)=b\eta(\alpha)$. In particular for all $( \alpha , b ) \in (w,b) ( \Lambda \times_\eta S )^N$ there is $( \beta , a ) \in (v,a) \Lambda s ( \alpha , b )$, and so $\Lambda\times_{\eta}S$ is cofinal.
\end{proof}


\begin{thm} \label{thm:cofinal}
Let $\Lambda$ be an aperiodic row-finite $k$-graph with no sources, $\eta : \Lambda \to S$ a functor, where $S$ is a semigroup and $\Lambda \times_\eta S$ the associated skew product graph. Then $C^* ( \Lambda\times_{\eta}S )$ is simple if and only if the system $(\Lambda , S , \eta )$ is cofinal.
\end{thm}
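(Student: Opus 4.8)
The plan is to assemble the theorem from the machinery already developed, since simplicity of a $k$-graph $C^*$-algebra is governed by cofinality and aperiodicity through Theorem~\ref{thm:rs}. First I would check that Theorem~\ref{thm:rs} actually applies to the skew product: because $\Lambda$ is row-finite with no sources and $\eta$ is a functor, the remark following the definition of $\Lambda \times_\eta S$ guarantees that $\Lambda \times_\eta S$ is again row-finite with no sources. Hence by Theorem~\ref{thm:rs}, $C^*(\Lambda \times_\eta S)$ is simple if and only if $\Lambda \times_\eta S$ is both cofinal \emph{and} aperiodic.

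The key observation is that, under the standing hypothesis that $\Lambda$ is aperiodic, the aperiodicity half of this criterion is automatic. Indeed, Corollary~\ref{cor:spg1} shows that aperiodicity of $\Lambda$ forces aperiodicity of $\Lambda \times_\eta S$. Consequently one of the two Robertson--Sims conditions is always satisfied, and the simplicity of $C^*(\Lambda \times_\eta S)$ becomes equivalent to cofinality of $\Lambda \times_\eta S$ alone.

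Finally I would translate cofinality of the skew product into a condition intrinsic to the system $(\Lambda, S, \eta)$. This is exactly the content of Proposition~\ref{prop:eta-cofinal}, which asserts that $\Lambda \times_\eta S$ is cofinal if and only if $(\Lambda, S, \eta)$ is cofinal. Chaining these three equivalences --- Robertson--Sims, then the automatic aperiodicity, then Proposition~\ref{prop:eta-cofinal} --- yields at once that $C^*(\Lambda \times_\eta S)$ is simple if and only if $(\Lambda, S, \eta)$ is cofinal, with both directions coming for free since each link is an ``if and only if''.

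There is no genuine technical obstacle here: the statement is a synthesis of Theorem~\ref{thm:rs}, Corollary~\ref{cor:spg1}, and Proposition~\ref{prop:eta-cofinal}, together with the structural remark on skew products. The only point demanding care is confirming at the outset that the skew product inherits row-finiteness and the no-sources property, so that the Robertson--Sims criterion is legitimately available; the rest is a direct logical chaining of the cited equivalences. In effect the aperiodicity hypothesis on $\Lambda$ is precisely what allows us to discard one of the two Robertson--Sims conditions and reduce the whole problem to the cofinality of the system.
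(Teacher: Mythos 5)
Your proof is correct and follows essentially the same route as the paper: both reduce the problem via the Robertson--Sims criterion (Theorem~\ref{thm:rs}), dispose of aperiodicity with Corollary~\ref{cor:spg1}, and translate cofinality of the skew product into cofinality of the system via Proposition~\ref{prop:eta-cofinal}. Your additional remark verifying that $\Lambda \times_\eta S$ inherits row-finiteness and the no-sources property is a small point of care that the paper leaves implicit.
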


\begin{proof}
Suppose that the system $(\Lambda , S , \eta )$ is cofinal. Then by Proposition \ref{prop:eta-cofinal}, $\Lambda\times_{\eta}S$ is cofinal. By Corollary~\ref{cor:spg1}, $\Lambda \times_\eta S$ is aperiodic and so by \cite[Theorem 3.1]{RobS}, $C^* ( \Lambda\times_{\eta}S )$ is simple.

Now suppose that $C^* ( \Lambda \times_\eta S )$ is simple. Then by \cite[Theorem 3.1]{RobS}, $\Lambda \times_\eta S$ is cofinal. By Proposition~\ref{prop:eta-cofinal} this implies that $(\Lambda , S , \eta )$ is cofinal.
\end{proof}

The condition of $( \Lambda , S, \eta )$ cofinality is difficult to check in practice. For $1$-graphs it was shown in \cite[Proposition 5.13]{PRho} that $\Lambda \times_d \ZZ^k$ is cofinal if $\Lambda$ is primitive\footnote{Actually strongly connected with period $1$ which is equivalent to primitive}. We seek an equivalent condition for $k$-graphs which guarantees $( \Lambda , S, \eta )$ cofinality.

\section{Primitivity and left-reversible semigroups} \label{sec:primitivity}

A semigroup $S$ is said to be \emph{left-reversible} if for all $s , t \in S$ we have $sS \cap tS \neq \emptyset$. It is more common to work with right-reversible semigroups, which are then called Ore semigroups (see \cite{MPR}). In analogy with the results of Dubriel it can be shown that a left-reversible semigroup has an enveloping group $\Gamma$ such that $\Gamma = S S^{-1}$.

In equation \eqref{eq:rsdef} we see that functor $\eta : \Lambda \to S$ multiplies on the right in the semigroup coordinate in the definition of the source map in a skew product graph $\Lambda \times_\eta S$. This forces us to consider left-reversible semigroups here. In order to avoid confusion we have decided not to call them Ore.

\begin{examples}
\begin{enumerate}[(i)]
\item Any abelian semigroup is automatically right- and left-reversible. Moreover, any group is a both a right- and left-reversible semigroup.
\item Let $\mathbb{N}$ denote the semigroup of natural numbers under addition and $\mathbb{N}^\times$ denote the semigroup of nonzero natural numbers under multiplication. Let $S = \mathbb{N} \times \mathbb{N}^\times$ be gifted with the associative binary operation $\star$ given by
\[
( m_1 , n_1 ) \star ( m_2 , n_2 ) = ( m_1 n_2 + m_2 , n_1 n_2 ) ,
\]
\noindent then one checks that $S$ is a nonabelian left-reversible semigroup. It is not right-reversible; for example, $S (m,n) \cap S(p,q) = \emptyset$ when $n=q=0$ and $m\neq p$.
\item The free semigroup $\mathbb{F}_n^+$ on $n \geq 2$ generators is not an left-reversible semigroup since for all $s,t \in \mathbb{F}_n^+$ with $s \neq t$ we have $s \mathbb{F}_{n}^{+} \cap t \mathbb{F}_{n}^{+} = \emptyset$ as there is no cancellation, and so not only the left-reversibility but also the right-reversibility conditions cannot be satisfied.
\end{enumerate}
\end{examples}

A \textit{preorder} is a reflexive, transitive relation $\leq$ on a set $X$. A preordered set $(X,\leq)$ is \textit{directed} if the following condition holds: for every $x,y\in X$, there exists $z\in X$ such that $x\leq z$ and $y\leq z$. A subset $Y$ of $X$ is \textit{cofinal} if for each $x\in X$ there exists $y\in Y$ such that $x\leq y$. We say that sets $X\leq Y$ if $x\leq y$ for all $x \in X$ and for all $y \in Y$. We say that $t\in S$ is \textit{strictly positive} if $\{t^n : n \ge 0 \}$ is a cofinal set in $S$.

The following result appears as \cite[Lemma 2.2]{PRY} for right-reversible semigroups.

\begin{lem}
\label{lemma:preceqdirect}
Let $S$ be a left-reversible semigroup with enveloping group $\Gamma$, and define $\geq_{l}$ on $\Gamma$ by $h\geq_{l}g$ if and only if $g^{-1}h \in S$. Then $\geq_{l}$ is a left-invariant preorder that directs $\Gamma$, and for any $t\in S$, $tS$ is cofinal in $S$.
\end{lem}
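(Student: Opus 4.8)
The plan is to verify the four assertions in turn, noting that the preorder and left-invariance claims are purely formal consequences of $S$ being a submonoid of $\Gamma$, while left-reversibility is genuinely needed only for the final cofinality statement. Recall that, since $S$ is a monoid sitting inside the group $\Gamma$, it contains the identity $e$ of $\Gamma$ (the identity of $S$ is idempotent, hence equal to $e$) and is closed under the ambient multiplication. For reflexivity, $g \geq_l g$ holds because $g^{-1}g = e \in S$. For transitivity, if $h \geq_l g$ and $g \geq_l f$ then $g^{-1}h \in S$ and $f^{-1}g \in S$, and the factorisation $f^{-1}h = (f^{-1}g)(g^{-1}h)$ exhibits $f^{-1}h$ as a product of elements of $S$, so $f^{-1}h \in S$ and $h \geq_l f$. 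Left-invariance follows from the cancellation $(kg)^{-1}(kh) = g^{-1}k^{-1}kh = g^{-1}h$ for every $k \in \Gamma$, so that $h \geq_l g$ if and only if $kh \geq_l kg$.

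For directedness I would use the hypothesis $\Gamma = SS^{-1}$ to write any element of $\Gamma$ as a right quotient of elements of $S$. Given $x,y \in \Gamma$, express $y^{-1}x = ab^{-1}$ with $a,b \in S$ and set $z = xb$. Then $x^{-1}z = b \in S$ and $y^{-1}z = y^{-1}xb = ab^{-1}b = a \in S$, so that $z \geq_l x$ and $z \geq_l y$; thus $z$ is a common upper bound for $x$ and $y$. This step uses only $\Gamma = SS^{-1}$ (itself a consequence of left-reversibility) together with the group relation $b^{-1}b = e$.

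For the last claim, fix $t \in S$ and an arbitrary $x \in S$; I must produce $y \in tS$ with $y \geq_l x$, that is $x^{-1}y \in S$. This is exactly where left-reversibility enters: since $xS \cap tS \neq \emptyset$, there are $p,q \in S$ with $xp = tq$. Taking $y = tq$ gives $y \in tS$, while $x^{-1}y = x^{-1}xp = p \in S$ shows $y \geq_l x$. Hence every $x \in S$ lies below an element of $tS$, so $tS$ is cofinal in $S$. The main subtlety throughout is the bookkeeping of left versus right quotients: membership in $S$ is always tested via the left quotient $g^{-1}h$, and it is precisely this convention that makes right multiplication $z = xb$ and the right coset $tS$ the correct choices, so that the argument mirrors but does not literally copy the right-reversible version in \cite{PRY}.
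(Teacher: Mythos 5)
Your proof is correct in all four parts: the verification that $\geq_l$ is a reflexive, transitive, left-invariant relation uses only that $S$ is a submonoid of $\Gamma$ containing $e$; the directedness argument correctly exploits $\Gamma = SS^{-1}$ via $z = xb$; and the cofinality of $tS$ is exactly where $xS \cap tS \neq \emptyset$ is invoked. Note that the paper itself supplies no proof of this lemma --- it simply cites the right-reversible analogue \cite[Lemma 2.2]{PRY} --- so your write-up fills in a detail the authors delegate to the literature, and it does so by the expected mirror-image of the Ore-semigroup argument, with the left/right bookkeeping (testing membership via $g^{-1}h$, hence multiplying on the right) handled correctly throughout.
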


\noindent
Our first attempt at a condition on $\eta$ which guarantees cofinality of $( \Lambda, S , \eta )$
is one which ensures that $\eta$ takes arbitrarily
large values on paths which terminate a given vertex.
%




\begin{dfn}
Let $\Lambda$ be a $k$-graph with no sources and $\eta : \Lambda \to S$ be a functor where $S$ is a left-reversible semigroup.
We will say that $\eta$ is \textit{upper dense} if for all $w \in \Lambda^0$ and $a,b \in S$ there exists $N \in \mathbb{N}^k$ such that $b \eta ( w \Lambda^N ) \geq_l a$.
\end{dfn}

\begin{lem} \label{lem:dud}
Let $(\Lambda,d)$ be a row-finite $k$-graph with no sources then $d$ is upper dense for $\Lambda$.
\end{lem}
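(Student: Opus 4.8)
The plan is to prove Lemma~\ref{lem:dud}: that the degree functor $d : \Lambda \to \NN^k$ is upper dense. Here the semigroup is $S = \NN^k$, which is abelian and hence left-reversible, with enveloping group $\Gamma = \ZZ^k$. The order $\geq_l$ on $\ZZ^k$ is exactly the usual coordinatewise order, since $n \geq_l m$ means $n - m \in \NN^k$. So the statement to prove unwinds to the following concrete claim: for every $w \in \Lambda^0$ and all $a, b \in \NN^k$, there exists $N \in \NN^k$ such that $b + d(\lambda) \geq a$ (coordinatewise) for every $\lambda \in w\Lambda^N$.

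\medskip

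First I would observe that since $\eta = d$, the set $d(w\Lambda^N)$ has a particularly simple description: every $\lambda \in w\Lambda^N$ satisfies $d(\lambda) = N$ by definition of $\Lambda^N = d^{-1}(N)$. Thus $b \eta(w\Lambda^N) = b + d(w\Lambda^N) \subseteq \{ b + N \}$; indeed the entire set collapses to the single value $b + N$ (as long as $w\Lambda^N$ is nonempty, which holds because $\Lambda$ has no sources). This is the key simplification that makes the lemma essentially immediate.

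\medskip

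The concrete step is then to choose $N$. Given $a, b \in \NN^k$, I would simply take $N = a$ (or any $N \geq a$). Then for every $\lambda \in w\Lambda^N$ we have $b + d(\lambda) = b + N = b + a \geq a$, so that $b + d(\lambda) \geq_l a$, which is precisely the required condition $b\eta(w\Lambda^N) \geq_l a$. Since $\Lambda$ has no sources, $w\Lambda^N \neq \emptyset$, so the condition is nonvacuous and genuinely satisfied.

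\medskip

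There is no real obstacle here: the content of the lemma is that the degree map, being a constant ($=N$) on each $\Lambda^N$, can be made to exceed any prescribed bound simply by selecting $N$ large enough, and the no-sources hypothesis guarantees the relevant path sets are nonempty. The only point requiring a moment's care is translating the abstract $\geq_l$ relation on the enveloping group $\ZZ^k$ back to the ordinary order on $\NN^k$, which is where I would begin the write-up to make the argument transparent.
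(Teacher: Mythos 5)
Your proof is correct and is essentially identical to the paper's: both note that $d$ is constantly equal to $N$ on $w\Lambda^N$, that the no-sources hypothesis makes $w\Lambda^N$ nonempty, and that choosing $N \geq a$ gives $b + d(w\Lambda^N) = b + N \geq a$. Your additional remark unwinding $\geq_l$ to the coordinatewise order on $\NN^k$ is a helpful clarification but does not change the argument.
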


\begin{proof}
Since $\Lambda$ has no sources it is immediate that $w \Lambda^N \neq \emptyset$ for all $w \in \Lambda^0$ and $N \in \NN^k$.
For any $b,a \in \NN^k$ we have $b + d ( w \Lambda^N ) = b + N \geq a$ provided $N \geq a$.
\end{proof}

\begin{examples} \label{ex:S-ud}
\begin{enumerate}[(i)]
\item Let $B_2$ be the $1$-graph which is the path category of the directed graph with a single vertex $v$ and two edges $e,f$. Define a functor $\eta : B_2 \to \NN$ by $\eta (e)=1$ and $\eta (f)=0$. We may form the skew product $B_2 \times_\eta \NN$ with $1$-skeleton:
\[
\begin{tikzpicture}
  \def\vertex(#1) at (#2,#3){
    \node[inner sep=0pt, circle, fill=black] (#1) at (#2,#3)
    [draw] {.};
  }
  \vertex(11) at (0, 1)
  \vertex(21) at (2, 1)
\vertex(31) at (4, 1)
\vertex(41) at (6, 1)
 \node at (7, 1) {$\dots$};
\node at (0,0.7) {$(v,0)$};
\node at (2,0.7) {$(v,1)$};
\node at (4,0.7) {$(v,2)$};
\node at (6,0.7) {$(v,3)$};

\draw[style=semithick, -latex] (21.west)--(11.east);
\draw[style=semithick, -latex] (31.west)--(21.east);
 \draw[style=semithick, -latex] (41.west)--(31.east);

\draw[style=semithick, -latex] (11.north east)
    .. controls (0.25,1.25) and (0.5,1.75) ..
    (0,1.75)
    .. controls (-0.5,1.75) and (-0.25,1.25) ..
    (11.north west);
 \draw[style=semithick, -latex] (21.north east)
    .. controls (2.25,1.25) and (2.5,1.75) ..
    (2,1.75)
    .. controls (1.5,1.75) and (1.75,1.25) ..
    (21.north west);
  \draw[style=semithick, -latex] (31.north east)
    .. controls (4.25,1.25) and (4.5,1.75) ..
    (4,1.75)
    .. controls (3.5,1.75) and (3.75,1.25) ..
    (31.north west);
  \draw[style=semithick, -latex] (41.north east)
    .. controls (6.25,1.25) and (6.5,1.75) ..
    (6,1.75)
    .. controls (5.5,1.75) and (5.75,1.25) ..
    (41.north west);
\end{tikzpicture}
\]

\noindent Fix $a,b \in \NN$, then since $n \in \eta ( v B_2^n )$ for all $n \in \NN$ it follows that if we choose $N=a$,
then $b + \eta ( v B_2^N ) \ge a$ and so $\eta$ is upper dense. However $( B_2 , \NN , \eta )$ is not cofinal: Choose $a=1$, $b=0$, then for all $N \ge 0$ there is $f^N \in v B_2^N$ is such that
\[
b+ \eta ( f^N ) = 0 \neq 1 + \eta ( \beta ) \text{ for all } \beta \in B_2 v.
\]

\item Define a functor $\eta$ from $T_2$ to $\NN^2$ such that $\eta(f_1)=(2,0)$, and $\eta(f_2)=(0,1)$. We may form the skew product $T_2\times_{\eta} \NN^2$ with the following $1$-skeleton:
  \[\begin{tikzpicture}[yscale=0.6]
  \def\vertex(#1) at (#2,#3){
    \node[inner sep=0pt, circle, fill=black] (#1) at (#2,#3)
    [draw] {.};}
  \def\invVertex(#1) at (#2,#3){
    \node[inner sep=0pt] (#1) at (#2,#3) {};}
  \vertex(vert01) at (-4,0) \node[inner sep=2pt, anchor = north] at (vertA.south) at (-4.5,-0.15) {$(0,0)$};
  \vertex(vert11) at (-2,0) \node[inner sep=2pt, anchor = north] at (vertA.south) at (-2.5,-0.15) {$(0,1)$};
  \vertex(vert21) at (0,0) \node[inner sep=2pt, anchor = north] at (vertA.south) at (-0.5,-0.15) {$(0,2)$};
  \vertex(vert31) at (2,0) \node[inner sep=2pt, anchor = north] at (vertA.south) at (1.5,-0.15) {$(0,3)$};
  \vertex(vert41) at (4,0) \node[inner sep=2pt, anchor = north] at (vertA.south) at (3.5,-0.15) {$(0,4)$};

  \vertex(vert02) at (-4,2) \node[inner sep=2pt, anchor = north] at (vertA.south) at (-4.5,1.85) {$(0,1)$};
  \vertex(vert12) at (-2,2) \node[inner sep=2pt, anchor = north] at (vertA.south) at (-2.5,1.85) {$(1,1)$};
  \vertex(vert22) at (0,2) \node[inner sep=2pt, anchor = north] at (vertA.south) at (-0.5,1.85) {$(1,2)$};
  \vertex(vert32) at (2,2) \node[inner sep=2pt, anchor = north] at (vertA.south) at (1.5,1.85) {$(1,3)$};
  \vertex(vert42) at (4,2) \node[inner sep=2pt, anchor = north] at (vertA.south) at (3.5,1.85) {$(1,4)$};

  \vertex(vert04) at (-4,4) \node[inner sep=2pt, anchor = north] at (vertA.south) at (-4.5,3.85) {$(0,2)$};
  \vertex(vert14) at (-2,4) \node[inner sep=2pt, anchor = north] at (vertA.south) at (-2.5,3.85) {$(1,2)$};
  \vertex(vert24) at (0,4) \node[inner sep=2pt, anchor = north] at (vertA.south) at (-0.5,3.85) {$(2,2)$};
  \vertex(vert34) at (2,4) \node[inner sep=2pt, anchor = north] at (vertA.south) at (1.5,3.85) {$(3,2)$};
  \vertex(vert44) at (4,4) \node[inner sep=2pt, anchor = north] at (vertA.south) at (3.5,3.85) {$(4,2)$};

  \vertex(vert08) at (-4,6) \node[inner sep=2pt, anchor = north] at (vertA.south) at (-4.5,5.85) {$(0,3)$};
  \vertex(vert18) at (-2,6) \node[inner sep=2pt, anchor = north] at (vertA.south) at (-2.5,5.85) {$(1,3)$};
  \vertex(vert28) at (0,6) \node[inner sep=2pt, anchor = north] at (vertA.south) at (-0.5,5.85) {$(2,3)$};
  \vertex(vert38) at (2,6) \node[inner sep=2pt, anchor = north] at (vertA.south) at (1.5,5.85) {$(3,3)$};
  \vertex(vert48) at (4,6) \node[inner sep=2pt, anchor = north] at (vertA.south) at (3.5,5.85) {$(4,3)$};
  \invVertex(vert51) at (5,0.3);
  \invVertex(vert52) at (5,2.3);
  \invVertex(vert54) at (5,4.3);
  \invVertex(vert58) at (5,6.3);
  \draw[style=dashed, -latex] (vert08.south)--(vert04.north);
  \draw[style=dashed, -latex] (vert04.south)--(vert02.north);
  \draw[style=dashed, -latex] (vert02.south)--(vert01.north);
  \draw[style=dashed, -latex] (vert18.south)--(vert14.north);
  \draw[style=dashed, -latex] (vert14.south)--(vert12.north);
  \draw[style=dashed, -latex] (vert12.south)--(vert11.north);
  \draw[style=dashed, -latex] (vert28.south)--(vert24.north);
  \draw[style=dashed, -latex] (vert24.south)--(vert22.north);
  \draw[style=dashed, -latex] (vert22.south)--(vert21.north);
  \draw[style=dashed, -latex] (vert38.south)--(vert34.north);
  \draw[style=dashed, -latex] (vert34.south)--(vert32.north);
  \draw[style=dashed, -latex] (vert32.south)--(vert31.north);
  \draw[style=dashed, -latex] (vert48.south)--(vert44.north);
  \draw[style=dashed, -latex] (vert44.south)--(vert42.north);
  \draw[style=dashed, -latex] (vert42.south)--(vert41.north);

  \draw[style=semithick, -latex] (vert41.north) parabola[parabola height=0.3cm] (vert21.north);
  \draw[style=semithick, -latex] (vert21.north) parabola[parabola height=0.3cm] (vert01.north);
  \draw[style=semithick, -latex] (vert31.north) parabola[parabola height=0.3cm] (vert11.north);
  \draw[style=semithick, -latex] (vert51.north) parabola[parabola height=0.3cm] (vert31.north);
  \draw[style=semithick, -latex] (vert51.south) parabola[parabola height=0.15cm] (vert41.north);

  \draw[style=semithick, -latex] (vert22.north) parabola[parabola height=0.3cm] (vert02.north);
  \draw[style=semithick, -latex] (vert32.north) parabola[parabola height=0.3cm] (vert12.north);
  \draw[style=semithick, -latex] (vert42.north) parabola[parabola height=0.3cm] (vert22.north);
  \draw[style=semithick, -latex] (vert52.north) parabola[parabola height=0.3cm] (vert32.north);
  \draw[style=semithick, -latex] (vert52.south) parabola[parabola height=0.15cm] (vert42.north);

  \draw[style=semithick, -latex] (vert24.north) parabola[parabola height=0.3cm] (vert04.north);
  \draw[style=semithick, -latex] (vert34.north) parabola[parabola height=0.3cm] (vert14.north);
  \draw[style=semithick, -latex] (vert44.north) parabola[parabola height=0.3cm] (vert24.north);
  \draw[style=semithick, -latex] (vert54.north) parabola[parabola height=0.3cm] (vert34.north);
  \draw[style=semithick, -latex] (vert54.south) parabola[parabola height=0.15cm] (vert44.north);

  \draw[style=semithick, -latex] (vert28.north) parabola[parabola height=0.3cm] (vert08.north);
  \draw[style=semithick, -latex] (vert38.north) parabola[parabola height=0.3cm] (vert18.north);
  \draw[style=semithick, -latex] (vert48.north) parabola[parabola height=0.3cm] (vert28.north);
  \draw[style=semithick, -latex] (vert58.north) parabola[parabola height=0.3cm] (vert38.north);
  \draw[style=semithick, -latex] (vert58.south) parabola[parabola height=0.15cm] (vert48.north);
  \end{tikzpicture}\]

\noindent We claim that the functor $\eta$ is not upper dense:
Fix $b=(b_1,b_2)$ and $a=(a_1,a_2)$ in $\NN^2$. Let $N_1$ be such that $b_1 +2N_1 \ge a_1$ and $N_2$ be such that $b_2 +N_2 \ge a_2$ then $b \eta ( v T_2^N ) \ge_l u$ where $N=(N_1,N_2)$. Moreover $( T_2 , \NN^2 , \eta )$ is not cofinal: Let $b=(0,0)$ and $a=(1,0)$ then since $\eta (f_1^{N_1} f_2^{N_2} ) = (2N_1,N_2)$ it follows that there cannot be $N=(N_1,N_2)\in \NN^2$ such that for $\alpha \in v T_2^N$ there is $\beta \in v T_2 v$ with $b \eta (\alpha ) = a \eta ( \beta )$.
\item Taking $T_2$ again, we define a functor $\eta:T_2\rightarrow\NN^2$ by $\eta(f_1)=(1,0)$ and $\eta(f_2)=(1,1)$. The skew product graph has $1$-skeleton:
\[\begin{tikzpicture}[yscale=0.6]
  \def\vertex(#1) at (#2,#3){ \node[inner sep=0pt, circle, fill=black] (#1) at (#2,#3) [draw] {.}; }
  \def\invVertex(#1) at (#2,#3){ \node[inner sep=0pt] (#1) at (#2,#3) {}; }
  \vertex(vert00) at (0,0);
  \vertex(vert10) at (2,0);
  \vertex(vert20) at (4,0);
  \vertex(vert30) at (6,0);
  \vertex(vert40) at (8,0);
  \vertex(vert01) at (0,2);
  \vertex(vert11) at (2,2);
  \vertex(vert21) at (4,2);
  \vertex(vert31) at (6,2);
  \vertex(vert41) at (8,2);
  \vertex(vert02) at (0,4);
  \vertex(vert12) at (2,4);
  \vertex(vert22) at (4,4);
  \vertex(vert32) at (6,4);
  \vertex(vert42) at (8,4);
  \invVertex(vert50) at (9,0);
  \invVertex(vert50a) at (9,1);
  \invVertex(vert51) at (9,2);
  \invVertex(vert51a) at (9,3);
  \invVertex(vert52) at (9,4);
  \invVertex(vert0a3) at (1,5);
  \invVertex(vert1a3) at (3,5);
  \invVertex(vert2a3) at (5,5);
  \invVertex(vert3a3) at (7,5);
  \invVertex(vert4a3) at (9,5);
  \draw[style=semithick, -latex] (vert10.west)--(vert00.east);
  \draw[style=semithick, -latex] (vert20.west)--(vert10.east);
  \draw[style=semithick, -latex] (vert30.west)--(vert20.east);
  \draw[style=semithick, -latex] (vert40.west)--(vert30.east);
  \draw[style=semithick, -latex] (vert11.west)--(vert01.east);
  \draw[style=semithick, -latex] (vert21.west)--(vert11.east);
  \draw[style=semithick, -latex] (vert31.west)--(vert21.east);
  \draw[style=semithick, -latex] (vert41.west)--(vert31.east);
  \draw[style=semithick, -latex] (vert12.west)--(vert02.east);
  \draw[style=semithick, -latex] (vert22.west)--(vert12.east);
  \draw[style=semithick, -latex] (vert32.west)--(vert22.east);
  \draw[style=semithick, -latex] (vert42.west)--(vert32.east);
  \draw[style=dashed, -latex] (vert12.south)--(vert01.north);
  \draw[style=dashed, -latex] (vert22.south)--(vert11.north);
  \draw[style=dashed, -latex] (vert32.south)--(vert21.north);
  \draw[style=dashed, -latex] (vert42.south)--(vert31.north);
  \draw[style=dashed, -latex] (vert11.south)--(vert00.north);
  \draw[style=dashed, -latex] (vert21.south)--(vert10.north);
  \draw[style=dashed, -latex] (vert31.south)--(vert20.north);
  \draw[style=dashed, -latex] (vert41.south)--(vert30.north);
  \draw[style=semithick, -latex] (vert50.west)--(vert40.east);
  \draw[style=semithick, -latex] (vert51.west)--(vert41.east);
  \draw[style=semithick, -latex] (vert52.west)--(vert42.east);
  \draw[style=dashed, -latex] (vert50a.south)--(vert40.north);
  \draw[style=dashed, -latex] (vert51a.south)--(vert41.north);
  \draw[style=dashed, -latex] (vert0a3.south)--(vert02.north);
  \draw[style=dashed, -latex] (vert1a3.south)--(vert12.north);
  \draw[style=dashed, -latex] (vert2a3.south)--(vert22.north);
  \draw[style=dashed, -latex] (vert3a3.south)--(vert32.north);
  \draw[style=dashed, -latex] (vert4a3.south)--(vert42.north);
\end{tikzpicture}\]

\noindent We claim that $\eta$ is upper dense: Fix $b=(b_2,b_2)$ and $a=(a_1,a_2)$ in $\NN^2$ then there is $N_1$ such that $b_1 + N_1 \ge a_1$ and $N_2$ such that $b_2 +N_1+N_2 \ge a_2$. Then with $N=(N_1,N_2)$ for all $\alpha \in v T_2^N$ we have $b \eta ( \alpha ) \ge_l a$. In this case $( T_2 , \NN^2 , \eta )$ is cofinal: Fix $b=(b_1, b_2)$ and $a=(a_1,a_2)$ in $\NN^2$. Then there is $N_1$ such that $b_1+N_1=a_1+m_1$ for some $m_1 \in \NN$ and $N_2$ such that $b_2+N_1+N_2=a_2+m_2$ for some $m_2 \in \NN$. Hence for all $\alpha \in v T_2^N$ where $N=(N_1,N_2)$ there is $\beta = (f_1^{m_1},f_2^{m_2}) \in v T_2 v$ such that $b \eta ( \alpha )= a \eta ( \beta )$.
\end{enumerate}
\end{examples}

It is clear from these last two examples that $\eta$ being upper dense is not sufficient to guarantee cofinality of $( \Lambda , S , \eta )$. The following definition allows for the interaction of the values of $\eta$ at different vertices of $\Lambda$ and the following result gives us the required extra condition.

\begin{dfn}
Let $\Lambda$ be a $k$-graph and $\eta : \Lambda \to S$ be a functor where $S$ is a left-reversible semigroup. We say that $\eta$ is \textit{$S$-primitive} for $\Lambda$ if there is a strictly positive $t \in S$ such that for all $v , w \in \Lambda^0$ we have $v \eta^{-1} ( s ) w \neq \emptyset$ for all $s \in S$ such that $s \geq_{l} t$.
\end{dfn}

\begin{rmks}
\begin{enumerate}[(i)]
\item The condition that $t$ is strictly positive in the above definition guarantees that $\eta ( v \Lambda w )$ is cofinal in $S$ for all $v,w \in \Lambda^0$.
\item If $\eta : \Lambda \to S$ is $S$-primitive for $\Lambda$ where $S$ is a left-reversible semigroup, then if we extend $\eta$ to $\Gamma = S S^{-1}$ then $\eta$ is $\Gamma$-primitive for $\Lambda$.
\end{enumerate}
\end{rmks}

\begin{examples} \label{ex:S-prim}
\begin{enumerate}[(i)]
\item Let $\Lambda$ be a $k$-graph. Then the degree functor $d : \Lambda \to \NN^k$ is $\NN^{k}$--primitive for $\Lambda$ if and only if $\Lambda$ is primitive as defined in Section \ref{ss:conn}. For this reason we will say that $\Lambda$ is primitive if $d$ is $\NN^k$ primitive for $\Lambda$.
\item As in Examples~\ref{ex:S-ud} (i) let $\eta : B_2 \to \NN$ be defined by $\eta (e)=1$, $\eta (f)=0$. Then the functor $\eta$ is $\NN$-primitive since $\eta^{-1} (n)$ is nonempty for all $n \in \NN$. Hence $\NN$-primitivity does not, by itself, guarantee cofinality.
\item As in Examples~\ref{ex:S-ud} (ii) let $\eta$ be the functor from $T_2$ to $\NN^2$ such that $\eta(f_1)=(2,0)$, and $\eta(f_2)=(0,1)$. Then the functor $\eta$ is not $\NN^2$-primitive for $T_2$: Take $t = (2m,n) \ge 0$ then if $s=(2m+1,n)$ we have $v \eta^{-1} ( s ) v = \emptyset$ and $s \geq_l t$. Similarly if $t=(2m+1) \ge 0$ then if $s=(2m+2,n)$ we have $v \eta^{-1} (s) v = \emptyset$ and $s \ge_l t$.

\item As in Examples~\ref{ex:S-ud} (iii) let $\eta:T_2\rightarrow\NN^2$ be defined by $\eta(f_1)=(1,0)$ and
$\eta (f_2)=(0,1)$. Then $\eta$ is not $\NN^2$-primitive for $T_2$ as $v\eta^{-1}(m,n)v=\emptyset$ whenever $n>m$.
\end{enumerate}
\end{examples}

\noindent The last two examples above illustrate that upper density and primitivity are unrelated conditions on a $k$-graph. Together they provide a necessary condition for cofinality.

\begin{prop} \label{prop:primcofinal}
Let $\Lambda$ be a $k$-graph with no sources and $\eta : \Lambda \to S$ be a functor where $S$ is a left-reversible semigroup. If $( \Lambda , S, \eta)$ is cofinal then $\eta$ is upper dense.
If $\eta$ is $S$-primitive for $\Lambda$ and upper dense then $( \Lambda , S , \eta )$ is cofinal.
\end{prop}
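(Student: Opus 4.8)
The plan is to prove the two implications separately, treating the first as essentially a reformulation of cofinality and the second as the substantive direction.

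For the first implication, suppose $(\Lambda,S,\eta)$ is cofinal and fix $w\in\Lambda^0$ together with $a,b\in S$. To produce the $N$ required by upper density I would simply invoke cofinality with both vertices equal to $w$ and the elements $a,b$: this yields $N\in\N^k$ such that for every $\alpha\in w\Lambda^N$ there is $\beta\in w\Lambda s(\alpha)$ with $a\eta(\beta)=b\eta(\alpha)$. Rearranging in the enveloping group $\Gamma=SS^{-1}$ gives $a^{-1}b\eta(\alpha)=\eta(\beta)\in S$, which is exactly the statement $b\eta(\alpha)\geq_{l}a$. Since this holds for every $\alpha\in w\Lambda^N$, we conclude $b\eta(w\Lambda^N)\geq_{l}a$, so $\eta$ is upper dense.

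For the second implication, assume $\eta$ is $S$-primitive, witnessed by a strictly positive $t\in S$, and upper dense. Fix $v,w\in\Lambda^0$ and $a,b\in S$. The key idea is to apply upper density not to $a$ but to the shifted element $at\in S$: this produces $N\in\N^k$ such that $b\eta(\alpha)\geq_{l}at$ for every $\alpha\in w\Lambda^N$, that is, $t^{-1}a^{-1}b\eta(\alpha)\in S$. I claim this single $N$ witnesses cofinality. Fix $\alpha\in w\Lambda^N$, write $u=s(\alpha)$ and set $s=a^{-1}b\eta(\alpha)\in\Gamma$. From the previous membership we have $t^{-1}s\in S$, hence $s\geq_{l}t$; moreover $s=t(t^{-1}s)$ lies in $S$ because $S$ is closed under multiplication. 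Thus $s$ is an element of $S$ with $s\geq_{l}t$, so $S$-primitivity gives $v\eta^{-1}(s)u\neq\emptyset$; any $\beta$ in this set satisfies $r(\beta)=v$, $s(\beta)=u=s(\alpha)$ and $\eta(\beta)=s=a^{-1}b\eta(\alpha)$, whence $a\eta(\beta)=b\eta(\alpha)$. This is precisely the condition defining cofinality of $(\Lambda,S,\eta)$.

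The only real subtlety, and the step I expect to be the crux, is the choice to pre-multiply $a$ by the primitivity threshold $t$ before invoking upper density. This single adjustment simultaneously forces the target value $s=a^{-1}b\eta(\alpha)$ to lie above $t$ in the preorder $\geq_{l}$, so that $S$-primitivity applies, and to be a genuine element of $S$, via $s=t(t^{-1}s)$; upper density applied to $a$ itself would only deliver $s\in S$ and not the inequality $s\geq_{l}t$ needed to invoke $S$-primitivity. Throughout I am implicitly using left-reversibility of $S$ to pass to the enveloping group $\Gamma=SS^{-1}$, in which the manipulations $a^{-1}b\eta(\alpha)$ and $(at)^{-1}=t^{-1}a^{-1}$ make sense, as recorded in Lemma~\ref{lemma:preceqdirect}.
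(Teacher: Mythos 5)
Your proof is correct and follows essentially the same route as the paper: the first implication reads off $b\eta(\alpha)=a\eta(\beta)\geq_l a$ directly from cofinality (the paper lets $v$ be arbitrary where you take $v=w$, an immaterial difference), and the second applies upper density to the shifted target $at$ and then $S$-primitivity to the element $tu=a^{-1}b\eta(\alpha)\geq_l t$, exactly as in the paper. No gaps.
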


\begin{proof}
Suppose that $( \Lambda , S, \eta )$ is cofinal. Fix $w \in \Lambda^0$ and $a,b \in S$ and let $v$ be any vertex of $\Lambda$. By cofinality of $(\Lambda, S, \eta)$ there exists $N \in \NN^k$ such that for all $\alpha \in w \Lambda^N$ there is $\beta \in v \Lambda s ( \alpha )$ such that $a \eta ( \beta ) = b \eta ( \alpha )$. Then any element of $b \eta ( w \Lambda^N )$ is of the form
\[
 b \eta ( \alpha ) = a \eta ( \beta ) \geq_l a .
\]

\noindent
Suppose $\eta$ is $S$-primitive and upper dense for $\Lambda$. Since $\eta$ is $S$--primitive for $\Lambda$ there exists $t \in S$ such that for all $v,w \in \Lambda^0$ we have $v \eta^{-1} (s) w \neq \emptyset$ for all $s \geq_l t$. Fix $v,w \in \Lambda^0$ and $a,b \in S$. Since $\eta$ is upper dense there exists $N \in \mathbb{N}^k$ such that $b \eta ( \alpha ) \geq_l at$ for all $\alpha \in w \Lambda^N$. Since $S$ is left-reversible, it is directed, and so by definition $b \eta ( \alpha ) = atu$ for some $u \in S$. But $tu \geq_l t$ and so since $\eta$ is $S$--primitive there exists $\beta \in v \Lambda s ( \alpha )$ such that $\eta ( \beta ) = tu$ and hence $b \eta ( \alpha ) = a \eta ( \beta )$.
\end{proof}

\begin{cor}
Let $\Lambda$ be a row-finite $k$-graph such that $d$ is $\NN^k$ primitive for $\Lambda$ then $( \Lambda , \NN^k , d)$ is cofinal.
\end{cor}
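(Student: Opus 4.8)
The plan is to recognise this corollary as exactly the special case $S = \NN^k$, $\eta = d$ of the second implication of Proposition~\ref{prop:primcofinal}, which asserts that if $\eta$ is $S$-primitive for $\Lambda$ and upper dense, then $(\Lambda, S, \eta)$ is cofinal. Thus the whole task reduces to checking that both hypotheses of that proposition hold in the present setting, together with the underlying structural assumptions on $\Lambda$ and $S$.

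First I would note that $\NN^k$ is abelian and hence left-reversible, so the framework of Proposition~\ref{prop:primcofinal} applies with $S = \NN^k$ and $\eta = d$. Next, the hypothesis that $d$ is $\NN^k$-primitive for $\Lambda$ is, by Examples~\ref{ex:S-prim}(i), precisely the statement that $\Lambda$ is primitive in the sense of Section~\ref{ss:conn}. Since a primitive $k$-graph is strongly connected, and a strongly connected $k$-graph has no sources, it follows that $\Lambda$ is a row-finite $k$-graph with no sources. This is exactly the standing hypothesis required to invoke Lemma~\ref{lem:dud}, which then yields immediately that the degree functor $d$ is upper dense for $\Lambda$.

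Having established that $d$ is both $\NN^k$-primitive (by assumption) and upper dense (by Lemma~\ref{lem:dud}), I would simply apply the second part of Proposition~\ref{prop:primcofinal} to conclude that $(\Lambda, \NN^k, d)$ is cofinal. There is no substantive obstacle in this argument; the only point meriting a moment's care is that the no-sources condition needed to apply Lemma~\ref{lem:dud} is not assumed separately but is deduced from primitivity through the chain of implications recorded in Section~\ref{ss:conn}, namely primitive $\Rightarrow$ strongly connected $\Rightarrow$ no sources.
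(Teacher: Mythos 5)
Your proposal is correct and follows essentially the same route as the paper: deduce that $\Lambda$ has no sources from $\NN^k$-primitivity of $d$, apply Lemma~\ref{lem:dud} to get upper density, and conclude via Proposition~\ref{prop:primcofinal}. The only difference is that you spell out the chain primitive $\Rightarrow$ strongly connected $\Rightarrow$ no sources, which the paper leaves implicit.
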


\begin{proof}
Since $d$ is $\NN^k$ primitive for $\Lambda$ it follows that $\Lambda$ has no sources. The result then follows from Lemma~\ref{lem:dud} and Proposition~\ref{prop:primcofinal}.
\end{proof}

\begin{example}
Let $\eta : T_2 \to S$ be any functor, then $\eta (S)$ is a subsemigroup of $S$ since $T_2$ has a single vertex; moreover $\eta$ is $\eta (S)$--primitive for $T_2$. Hence if $\eta$ is upper dense for $T_2$, it follows that $( T_2 , \eta (S) , \eta )$ is cofinal. In particular, in Example\ref{ex:S-ud} (ii) one checks that $( T_2 , \eta ( \NN^2 , \eta )$ is cofinal.
\end{example}

\begin{thm}
Let $\Lambda$ be an aperiodic $k$-graph, $\eta:\Lambda \rightarrow S$ be a functor into a left-reversible semigroup, and $\eta$ be $S$--primitive for $\Lambda$. Then $C^{*}(\Lambda \times_{\eta} S)$ is simple if and only if $\eta$ is upper dense.
\end{thm}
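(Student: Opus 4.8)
The plan is to recognise that this theorem is essentially a repackaging of Theorem~\ref{thm:cofinal} together with Proposition~\ref{prop:primcofinal}, so the work consists of verifying that the hypotheses of those two results hold and then chaining the two biconditionals. The first thing I would record is that the standing hypotheses needed downstream --- namely that $\Lambda$ is row-finite with no sources --- are already built into the assumption that $\Lambda$ is aperiodic, since aperiodicity was only defined for row-finite $k$-graphs with no sources. This lets me invoke both Theorem~\ref{thm:cofinal} and Proposition~\ref{prop:primcofinal} without any further preliminary checks.

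For the forward implication I would assume that $C^{*}(\Lambda \times_{\eta} S)$ is simple. Since $\Lambda$ is aperiodic, Theorem~\ref{thm:cofinal} gives that the system $(\Lambda, S, \eta)$ is cofinal. Then the first half of Proposition~\ref{prop:primcofinal} --- which requires only that $S$ be left-reversible --- immediately yields that $\eta$ is upper dense. It is worth flagging that the $S$-primitivity hypothesis plays no role in this direction.

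For the converse I would assume that $\eta$ is upper dense. Now both standing hypotheses of the second half of Proposition~\ref{prop:primcofinal} are in force: $\eta$ is $S$-primitive for $\Lambda$ (by assumption) and upper dense (just assumed), so $(\Lambda, S, \eta)$ is cofinal. Feeding this back into Theorem~\ref{thm:cofinal}, together with the aperiodicity of $\Lambda$, gives that $C^{*}(\Lambda \times_{\eta} S)$ is simple, completing the equivalence.

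The main (and essentially only) obstacle is bookkeeping rather than mathematics: ensuring that the two directions invoke the correct halves of Proposition~\ref{prop:primcofinal} and that the left-reversibility and $S$-primitivity hypotheses are deployed exactly where each is needed --- left-reversibility throughout, as it is what makes $\geq_{l}$ a directed preorder via Lemma~\ref{lemma:preceqdirect}, and $S$-primitivity only in the direction that upgrades upper density to cofinality. No new estimates or constructions are required beyond those already established.
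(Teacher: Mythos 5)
Your proposal is correct and follows essentially the same route as the paper: the paper's proof likewise chains Proposition~\ref{prop:primcofinal} with Theorem~\ref{thm:cofinal} in both directions (and your explicit citation of the first half of Proposition~\ref{prop:primcofinal} for the forward implication is if anything a cleaner bookkeeping of the references than the paper's own). No gaps.
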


\begin{proof}
If $\eta$ is upper dense then the result follows from Proposition~\ref{prop:primcofinal}. On the other hand if $C^* ( \Lambda \times_\eta S )$ is simple then the result follows from Theorem~\ref{thm:cofinal} and Corollary~\ref{cor:spg1}.
\end{proof}

\section{Skew products by a group} \label{sec:group}

Let $\Lambda$ be a row-finite $k$-graph. A functor $\eta : \Lambda \to G$ defines a coaction $\delta_\eta$ on $C^* (\Lambda )$ determined by $\delta_\eta ( s_\lambda ) = s_\lambda \otimes \eta ( \lambda )$. It is shown in \cite[Theorem 7.1]{PQR} that $C^* ( \Lambda \times_\eta G )$ is isomorphic to $C^* ( \Lambda ) \times_{\delta_\eta} G$. Hence we may relate the simplicity of the $C^*$-algebra of a skew product graph to the simplicity of the associated crossed product. This can be done by using the results of \cite{Q}.

Following \cite[Lemma 7.9]{PQR}, for $g \in G$ the spectral subspace $C^* ( \Lambda )_g$ of the coaction $\delta_\eta$ is given by
\[
C^* ( \Lambda )_g = \overline{\operatorname{span}} \{ s_\lambda s_\mu^* : \eta ( \lambda ) \eta ( \mu )^{-1} = g \} .
\]

\noindent
We define $\sp ( \delta_\eta ) = \{ g \in G : C^* ( \Lambda )_g \neq \emptyset \}$, to be the collection of non-empty spectral subspaces. The \textit{fixed point algebra}, $C^* ( \Lambda )^{\delta_\eta}$ of the coaction is defined to be $C^* ( \Lambda )_{1_G}$. For more details on the coactions of discrete groups on $k$-graph algebras, see \cite[\S 7]{PQR} and \cite{Q}.

We give necessary and sufficient conditions for the skew product graph $C^{*}$-algebra to be simple in terms of the fixed-point algebra as our main result in Theorem~\ref{thm:coact}. We are particularly interested in the case when $\eta$ is the degree functor.

\begin{dfn}
Let $\Lambda$ be a row-finite $k$-graph, $G$ be a discrete group and $\eta : \Lambda \to G$ a functor, then we define
\[
\Gamma ( \eta ) = \{ g \in G : g = \eta ( \lambda ) \eta ( \mu )^{-1} \text{ for some } \lambda , \mu \in \Lambda \text{ with } s ( \lambda ) = s ( \mu ) \}.
\]
\end{dfn}

\begin{lem} \label{lem:gammaeta}
Let $\Lambda$ be a row-finite graph with no sources and $\eta : \Lambda \to G$ a functor, where $G$ is a discrete group.
\begin{enumerate}[(a)]
\item If $( \Lambda , G , \eta )$ is cofinal then $\Gamma ( \eta ) = G$.
\item $\sp ( \delta_\eta ) = G$ if and only if $\Gamma ( \eta ) = G$.
\end{enumerate}
\end{lem}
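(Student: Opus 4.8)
The plan is to treat the two parts separately, with part (b) reducing to the observation that $\sp(\delta_\eta)$ and $\Gamma(\eta)$ are in fact the \emph{same} subset of $G$.

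For part (a) I would fix $g \in G$ and manufacture $\lambda,\mu \in \Lambda$ with $s(\lambda)=s(\mu)$ and $\eta(\lambda)\eta(\mu)^{-1}=g$ directly from the defining property of $(\Lambda,G,\eta)$-cofinality (Definition~\ref{def:etacofinal}). Picking any vertex $v_0 \in \Lambda^0$ and applying cofinality with $v=w=v_0$, $a=1_G$ and $b=g$ produces $N \in \N^k$ such that every $\alpha \in v_0\Lambda^N$ admits some $\beta \in v_0\Lambda s(\alpha)$ with $\eta(\beta)=g\,\eta(\alpha)$. Since $\Lambda$ has no sources, $v_0\Lambda^N \neq \emptyset$, so fixing such an $\alpha$ together with its $\beta$ gives $s(\beta)=s(\alpha)$ and $\eta(\beta)\eta(\alpha)^{-1}=g$; hence $g \in \Gamma(\eta)$, and as $g$ was arbitrary, $\Gamma(\eta)=G$.

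For part (b) the key ingredient is that for $\lambda,\mu \in \Lambda$ one has $s_\lambda s_\mu^* \neq 0$ if and only if $s(\lambda)=s(\mu)$. Writing $s_\lambda = s_\lambda s_{s(\lambda)}$ and $s_\mu^* = s_{s(\mu)}s_\mu^*$ and using the orthogonality of the vertex projections (CK1), the product collapses to $s_\lambda s_{s(\lambda)} s_{s(\mu)} s_\mu^*$, which vanishes when $s(\lambda)\neq s(\mu)$; when $s(\lambda)=s(\mu)$ one computes $(s_\lambda s_\mu^*)^*(s_\lambda s_\mu^*) = s_\mu s_{s(\lambda)} s_\mu^* = s_\mu s_\mu^* \neq 0$, the nonvanishing coming from the fact that every generator is nonzero (\cite[Proposition~2.11]{KP}). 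Granting this, if $g \in \Gamma(\eta)$ then a witnessing pair $\lambda,\mu$ yields a nonzero $s_\lambda s_\mu^* \in C^*(\Lambda)_g$, so $g \in \sp(\delta_\eta)$; conversely, if $C^*(\Lambda)_g \neq \{0\}$ then not all the spanning elements $s_\lambda s_\mu^*$ with $\eta(\lambda)\eta(\mu)^{-1}=g$ can vanish, and any nonzero one forces $s(\lambda)=s(\mu)$, whence $g \in \Gamma(\eta)$. Thus $\sp(\delta_\eta)=\Gamma(\eta)$ as sets, and in particular one equals $G$ exactly when the other does.

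The genuinely routine parts are the choice of $a,b$ in (a) and the set-theoretic bookkeeping in (b). The single point requiring care is the nonvanishing criterion $s_\lambda s_\mu^* \neq 0 \Leftrightarrow s(\lambda)=s(\mu)$, since this is precisely what converts the analytic assertion that a spectral subspace is nonzero into the combinatorial assertion that there exists a pair of paths with a common source. That is where I would invoke the Cuntz--Krieger relations and the nonvanishing of the generators explicitly.
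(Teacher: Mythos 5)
Your proposal is correct and follows essentially the same route as the paper: part (a) is the paper's argument with the specific choice $a=1_G$, $b=g$ (the paper writes $g=b^{-1}a$ for arbitrary $a,b$), and part (b) is exactly the identification $\sp(\delta_\eta)=\Gamma(\eta)$ that the paper dismisses as ``follows by definition.'' Your explicit verification that $s_\lambda s_\mu^*\neq 0$ if and only if $s(\lambda)=s(\mu)$, via (CK1)--(CK3) and the nonvanishing of the generators, is the detail the paper suppresses, and your explicit use of the no-sources hypothesis to guarantee $v_0\Lambda^N\neq\emptyset$ is likewise a point the paper leaves implicit.
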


\begin{proof}
Fix $g \in G$ and write $g=b^{-1}a$ for some $a,b \in G$. Now fix $v,w \in \Lambda^0$; since $( \Lambda , G , \eta )$ is cofinal there exist $\lambda , \mu \in \Lambda$ with $s ( \lambda ) = s ( \mu )$ such that $a \eta ( \mu ) = b \eta ( \lambda )$. Hence $b^{-1} a = \eta ( \lambda ) \eta ( \mu )^{-1}$ and so $g \in \Gamma ( \eta )$. Since $g$ was arbitrary the result follows.

The second statement follows by definition.
\end{proof}

\begin{thm} \label{thm:coact}
Let $\Lambda$ be an aperiodic row-finite $k$-graph with no sources, $\eta : \Lambda \to G$ a functor and $\delta_\eta$ the associated coaction of $G$ on $C^* ( \Lambda )$. Then $C^* ( \Lambda \times_\eta G )$ is simple if and only if $C^* ( \Lambda )^{\delta_\eta}$ is simple and $\Gamma ( \eta ) = G$.
\end{thm}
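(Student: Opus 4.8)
The plan is to pass from the skew-product graph algebra to the crossed product by the coaction $\delta_\eta$ and then apply a simplicity criterion for such crossed products. First I would invoke \cite[Theorem 7.1]{PQR} to replace $C^*(\Lambda \times_\eta G)$ by the isomorphic crossed product $C^*(\Lambda) \times_{\delta_\eta} G$. Writing $A = C^*(\Lambda)$, the fixed point algebra is $A^{\delta_\eta} = A_{1_G}$ and the spectral subspaces $A_g = C^*(\Lambda)_g$ of \cite[Lemma 7.9]{PQR} form the associated Fell bundle over $G$. The theorem then reduces to showing that $A \times_{\delta_\eta} G$ is simple if and only if $A^{\delta_\eta}$ is simple and $\sp(\delta_\eta) = G$, since the translation of the latter into $\Gamma(\eta) = G$ is exactly Lemma~\ref{lem:gammaeta}(b).

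For the forward direction, suppose $A \times_{\delta_\eta} G$ is simple. Writing $p_g$ for the canonical multiplier projections coming from $j_G(c_0(G))$, one has the identification $p_g\,(A\times_{\delta_\eta}G)\,p_h \cong A_{gh^{-1}}$; in particular $p_{1_G}(A\times_{\delta_\eta}G)p_{1_G} \cong A^{\delta_\eta}$. Since every nonzero corner of a simple $C^*$-algebra is full, $A^{\delta_\eta}$ is Morita equivalent to the whole algebra, hence simple. To see $\sp(\delta_\eta)=G$, suppose $A_g = 0$ for some $g$; then $p_g\,(A\times_{\delta_\eta}G)\,p_{1_G}\cong A_g = 0$, and taking adjoints also $p_{1_G}\,(A\times_{\delta_\eta}G)\,p_g = 0$. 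As $p_g \neq 0$ and the algebra is simple, the ideal generated by $p_g$ is everything, so $p_{1_G}(A\times_{\delta_\eta}G) = p_{1_G}\overline{(A\times_{\delta_\eta}G)p_g(A\times_{\delta_\eta}G)} = 0$, which is absurd because $p_{1_G}(A\times_{\delta_\eta}G)p_{1_G}\cong A^{\delta_\eta}\neq 0$. Hence $\sp(\delta_\eta)=G$.

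For the converse, assume $A^{\delta_\eta}$ is simple and $\sp(\delta_\eta)=G$. The key observation is that each nonzero fibre is then full over $A^{\delta_\eta}$: the closure of $A_g^* A_g$ is a two-sided ideal of the simple algebra $A^{\delta_\eta}$ and is nonzero whenever $A_g \neq 0$, so its closure equals $A^{\delta_\eta}$, and symmetrically for $A_g A_g^*$. Thus the Fell bundle $\{A_g\}$ is saturated, and for a discrete group the crossed product $A\times_{\delta_\eta}G$ is Morita equivalent to its unit fibre $A^{\delta_\eta}$; simplicity of $A^{\delta_\eta}$ then transfers to $A\times_{\delta_\eta}G$ through the Rieffel correspondence. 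Aperiodicity of $\Lambda$ enters here only to keep the graph-theoretic picture consistent: by Corollary~\ref{cor:spg1} it guarantees that $\Lambda\times_\eta G$ is aperiodic, so that Theorem~\ref{thm:rs} (and the equivalence with $(\Lambda,G,\eta)$-cofinality of Theorem~\ref{thm:cofinal} via Proposition~\ref{prop:eta-cofinal}) remains available as a cross-check against the coaction computation.

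The step I expect to be the main obstacle is the passage from saturation to the Morita equivalence $A\times_{\delta_\eta}G \sim A^{\delta_\eta}$: making precise that, for a coaction of a discrete group, the crossed product is Morita equivalent to the fixed point algebra exactly when the underlying Fell bundle is saturated. This is where I would lean on the structural results of \cite{Q}; the delicate point is verifying that the spectral-subspace description of \cite[Lemma 7.9]{PQR}, under the hypothesis $\sp(\delta_\eta)=G$ together with simplicity of $A^{\delta_\eta}$, yields a genuinely \emph{saturated} bundle and not merely one with all fibres nonzero.
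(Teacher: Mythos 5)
Your argument follows the paper's proof of Theorem~\ref{thm:coact} exactly: pass from $C^*(\Lambda\times_\eta G)$ to the isomorphic crossed product $C^*(\Lambda)\times_{\delta_\eta}G$ via \cite[Theorem 7.1]{PQR}, reduce simplicity of that crossed product to simplicity of $C^*(\Lambda)^{\delta_\eta}$ together with $\sp(\delta_\eta)=G$, and convert the spectral condition into $\Gamma(\eta)=G$ using Lemma~\ref{lem:gammaeta}(b). The only difference is that the paper simply cites \cite[Theorem 2.10]{Q} for the middle equivalence, whereas you sketch its proof through the corners $p_g(A\times_{\delta_\eta}G)p_h\cong A_{gh^{-1}}$ and the saturation argument; that sketch is sound (and, as you note, aperiodicity plays no role in this part), so nothing essential is missing.
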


\begin{proof}
By \cite[Theorem 7.1]{PQR} it follows that $C^* ( \Lambda \times_\eta G )$ is isomorphic to $C^* ( \Lambda ) \times_{\delta_\eta} G$. Then by \cite[Theorem 2.10]{Q} $C^* ( \Lambda ) \times_{\delta_\eta} G$ is simple if and only if $C^* ( \Lambda )^{\delta_\eta}$ is simple and $\operatorname{sp} ( \delta_\eta ) =G$. The result now follows from Lemma~\ref{lem:gammaeta}.
\end{proof}

\begin{example} \label{ex:dcase}
Let $\Lambda$ be a row-finite $k$-graph with no sources and $d : \Lambda \to \NN^k$ be the degree functor. We claim that $\Gamma (d) = \ZZ^k$. Fix $p \in \ZZ^k$, and write $p=m-n$ where $m,n \in \NN^k$. Since $\Lambda$ has no sources, for every $v \in \Lambda^0$ there is $\lambda \in \Lambda^m v$ and $\mu \in \Lambda^n v$. Then
\[
d ( \lambda ) - d ( \mu ) = m - n = p \in \Gamma ( d ),
\]

\noindent and so $\Gamma (d) = \ZZ^k$. Since $\Gamma(d)=\ZZ^{k}$, and $(\Lambda, \ZZ^{k}, d)$ is aperiodic, we have that $C^* ( \Lambda )^{\delta_d}$ is simple if and only $( \Lambda , \NN^k, d )$ is cofinal.
\end{example}

\noindent We seek conditions on $\Lambda$ that will guarantee $( \Lambda , \NN^k ,d )$ is cofinal.

\section{The gauge coaction}

The coaction $\delta_d$ of $\mathbb{Z}^k$ on $C^* ( \Lambda )$ defined in Section~\ref{sec:group} is such that
the fixed point algebra $C^* ( \Lambda )^{\delta_d}$ is precisely the fixed point algebra $C^* ( \Lambda )^\gamma$ for the canonical gauge action of $\mathbb{T}^k$ on $C^* ( \Lambda )$ by the Fourier transform (cf.\ \cite[Corollary 4.9]{C}.

By \cite[Lemma 3.3]{KP} the fixed point algebra $C^* ( \Lambda )^\gamma$ is AF, and is usually referred to as the AF core. In Theorem~\ref{thm:fpa} we use the results of the last two sections to give necessary and sufficient conditions for the AF core $C^* ( \Lambda )^\gamma$ to be simple when $\Lambda^0$ is finite.
When there are infinitely many vertices we show, in Theorem~\ref{thm:notcofinal} that in many
cases the AF core is not simple.

The AF core of a $k$-graph algebra plays a significant role in the development of crossed products by endomorphisms. Results of Takehana and Katayama \cite{KT} show that when $\Lambda$ is a finite $1$-graph such that the core $C^* ( \Lambda )$ is simple, then every nontrivial automorphism of $C^* ( \Lambda )$ is outer (see \cite[Proposition 3.4]{PRW}).

We saw in Example~\ref{ex:spgnotconn} that a $k$-graph being strongly connected is not enough to guarantee that $\Lambda \times_d \ZZ^k$ is cofinal, and hence by \cite[Theorem 3.1]{RobS} $C^* ( \Lambda \times_d \ZZ^k )$ is not simple and then by Theorem~\ref{thm:coact} the AF core is not simple. Another condition is required to guarantee that $\Lambda \times_d \ZZ^k$ is cofinal, which is suggested by \cite{PRho} and was introduced in Section \ref{sec:primitivity}:

\begin{thm} \label{thm:cofinal-prim}
Let $\Lambda$ be a row-finite $k$-graph with no sinks and sources and $\Lambda^0$ finite. If $( \Lambda , d , \mathbb{Z}^k )$ is cofinal then $\Lambda$ is primitive.
\end{thm}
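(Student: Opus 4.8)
The plan is to first reduce primitivity to a statement about the degrees of return paths, then extract the crucial group-theoretic consequence of cofinality, and finally feed this into a Perron--Frobenius type argument adapted to a commuting family of matrices.

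First I would record the easy structural consequences of the hypothesis. Since the system $(\Lambda,\mathbb{Z}^k,d)$ is cofinal, Proposition~\ref{prop:eta-cofinal} shows $\Lambda\times_d\mathbb{Z}^k$ is cofinal, whence $\Lambda$ is cofinal by Corollary~\ref{cor:spg2}; as $\Lambda$ has no sinks and $\Lambda^0$ is finite, Proposition~\ref{prop:cofinal-SC}(2) gives that $\Lambda$ is strongly connected. Next I would introduce the periodicity subgroup $H\leq\mathbb{Z}^k$ generated by all differences $d(\lambda)-d(\mu)$ with $r(\lambda)=r(\mu)$ and $s(\lambda)=s(\mu)$, and reduce the target as follows. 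By Lemma~\ref{lem:matrixcond}(b), $\Lambda$ is primitive exactly when there is $N_0>0$ with $M^{N_0}(u,v)>0$ for all pairs $u,v$ simultaneously, i.e.\ $u\Lambda^{N_0}v\neq\emptyset$ for all $u,v$. Because $\Lambda^0$ is finite, it suffices to prove the eventual statement that for each pair $u,v$ the set $D_{u,v}=\{m\in\mathbb{N}^k : u\Lambda^m v\neq\emptyset\}$ contains every sufficiently large $m$; then $N_0=\bigvee_{u,v}m_{u,v}$, raised above $(1,\dots,1)$, does the job.

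The key observation is that cofinality forces $H=\mathbb{Z}^k$. Fix a vertex $v_0$ and any $g\in\mathbb{Z}^k$, and write $g=b-a$ with $a,b\in\mathbb{Z}^k$. Applying Definition~\ref{def:etacofinal} with $v=w=v_0$ produces $N\in\mathbb{N}^k$ so that, choosing any $\alpha\in v_0\Lambda^N$ (which exists since $\Lambda$ has no sources), there is $\beta\in v_0\Lambda s(\alpha)$ with $a+d(\beta)=b+d(\alpha)$, that is $d(\beta)=g+N$. Since $\alpha$ and $\beta$ share both range $v_0$ and source $s(\alpha)$, their degree difference lies in $H$, so $-g=d(\alpha)-d(\beta)=N-(g+N)\in H$. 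As $g$ was arbitrary, $H=\mathbb{Z}^k$.

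The hard part will be the final step: deducing the eventual-positivity statement from strong connectivity together with $H=\mathbb{Z}^k$. I would argue that $H=\mathbb{Z}^k$ removes the only congruence obstruction, since in a strongly connected $k$-graph every $D_{u,v}$ lies in a single coset of $H$, now all of $\mathbb{Z}^k$. To see there is no directional obstruction either, I would use that $\Lambda$ has no sinks or sources: following edges of a single colour from a vertex and invoking finiteness of $\Lambda^0$ (as in the proof of Lemma~\ref{lemma:cofinal-loops}) yields, for each $i$, a return path whose degree is a positive multiple of $e_i$ at some vertex, which strong connectivity transports into $D_{v,v}$. Consequently each return semigroup contains an offset ray in every coordinate direction, so the cone it generates is all of $\mathbb{R}^k_{\geq 0}$; combining these rays with the unit adjustments guaranteed by $H=\mathbb{Z}^k$ through a numerical-semigroup filling argument should show that each $D_{v,v}$, and hence each $D_{u,v}$, contains every sufficiently large $m$. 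Finiteness of $\Lambda^0$ then yields the uniform $N_0>0$ with $M^{N_0}$ strictly positive, so $\Lambda$ is primitive. The main obstacle is making this last combinatorial filling precise; it is essentially the extension of the Perron--Frobenius dichotomy (irreducible plus aperiodic equals primitive) from a single matrix to the commuting family $M_1,\dots,M_k$, and is where the bulk of the work lies.
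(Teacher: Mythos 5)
Your reduction to strong connectivity (via Proposition~\ref{prop:eta-cofinal}, Corollary~\ref{cor:spg2} and Proposition~\ref{prop:cofinal-SC}) and your verification that cofinality of $(\Lambda,\ZZ^k,d)$ forces $H=\ZZ^k$ are both correct. But the proof is not complete: its entire weight rests on the final assertion that strong connectivity, the rays $c_i+\NN r_ie_i\subseteq D_{v,v}$, and $H=\ZZ^k$ together force $D_{v,v}$ to contain every sufficiently large $m$, and you explicitly leave this ``combinatorial filling'' unproved. That step is the theorem, not a routine appendix. It can in fact be completed: summing the rays gives $C+r\NN^k\subseteq D_{v,v}$ where $C=\sum_ic_i$ and $r=\operatorname{lcm}(r_1,\dots,r_k)$; strong connectivity shows the difference group of the submonoid $D_{v,v}$ is all of $H=\ZZ^k$; hence the image of $D_{v,v}$ in the finite group $(\ZZ/r\ZZ)^k$ is a subsemigroup generating the group and therefore equals the whole group (finite order makes inverses positive powers), which supplies a representative $s_\sigma\in D_{v,v}$ of every residue class, and $s_\sigma+C+r\NN^k\subseteq D_{v,v}$ then covers all sufficiently large elements of each class. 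None of this is in your write-up, so as it stands the argument has a genuine gap at its decisive step.

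You should also be aware that the paper's proof sidesteps all of this by using the full strength of skew-product cofinality rather than only its group-theoretic shadow $H=\ZZ^k$. Because vertices of $\Lambda\times_d\ZZ^k$ carry a $\ZZ^k$ coordinate, cofinality there produces a path of \emph{prescribed} degree: any path from $(w,0)$ to $(v,tr)$ must lie in $w\Lambda^{tr}v$. Taking $\alpha\in v\Lambda v$ with $d(\alpha)=r>0$ (strong connectivity), choosing $t$ with $tr\geq N:=\max_{w}N_w$ (finitely many vertices), applying cofinality to $(\alpha^t,0)$ to get $\beta\in w\Lambda^{tr}v$, and prepending any $\gamma\in v\Lambda^{n-tr}w$ (no sources) yields $\gamma\beta\in v\Lambda^nv$ for every $n\geq tr$ directly; primitivity then follows by composing with connecting paths and maximising over the finite vertex set. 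So your observation that cofinality kills the congruence obstruction is correct but discards information: the hypothesis already hands you the return paths you are trying to manufacture, and the Perron--Frobenius-style filling you flag as ``the bulk of the work'' is unnecessary.
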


\begin{proof}
 We claim that for $v \in
\Lambda^0$ there is
$N(v) \in \NN^k$ such that for all $n \ge N(v)$ we have $v \Lambda^n v \neq
\emptyset$. Fix $(v,0) \in ( \Lambda \times_d \ZZ^k )^0$ then for each $w \in
\Lambda^0$, when we apply the cofinality condition to $(w,0) \in ( \Lambda
\times_d \ZZ^k )^0$ we obtain $N_w \in \NN^k$ such that $(v,0) ( \Lambda
\times_d \ZZ^k ) s ( \alpha , 0 ) \neq \emptyset$ for all $( \alpha , 0 ) \in (
w, 0 ) ( \Lambda \times_d \ZZ^k )^{N_w}$. Define
$N=\max_{w\in\Lambda^{0}}\{N_{w}\}$, which is finite since $\Lambda^{0}$ is
finite.

By Proposition \ref{prop:cofinal-SC} it follows that $\Lambda$ is strongly
connected,
hence there exists $\alpha\in v\Lambda v$ with $d(\alpha)=r>0$. Hence, there
exists $t\geq 1$ such that $tr\geq N$. Let $N(v)=tr$.

Let $m=n-tr\geq0$. Since $\Lambda$ has no sources, $v\Lambda^{m}\neq\emptyset$;
hence
there exists $\gamma\in v\Lambda^{m}$. Let $w=s(\gamma)$. For
$(v,0),(w,0)\in(\Lambda\times_{d}\Z^{k})^{0}$, we have $( \alpha^t , 0)
\in(v,0)(\Lambda\times_{d}\Z^{k})^{tr}$ where $tr\geq N\geq N_{w}$. By
cofinality and Lemma \ref{lem:cofinalconn} (b), there exists
$(\beta,0)\in(w,0)(\Lambda\times_{d}\Z^{k}) (v,tr)$ as $s( \alpha^t , 0) =
(v,tr)$. As $\beta \in w \Lambda^{tr} v$ it follows that $\gamma\beta\in
v\Lambda^{n}v$, which proves the claim.
\end{proof}


\noindent The following result generalises results from \cite{PRho}:

\begin{thm} \label{thm:fpa}
Let $( \Lambda , d)$ be a row-finite $k$-graph with no sinks or sources, and $\Lambda^0$ finite. Then $C^{*}(\Lambda)^{\delta_d}$ is simple if and only if $\Lambda$ is primitive.
\end{thm}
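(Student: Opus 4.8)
The plan is to route both directions through the cofinality of the system $(\Lambda,\NN^k,d)$, using the equivalence already recorded in Example~\ref{ex:dcase}: because $\Gamma(d)=\ZZ^k$ and the skew product $\Lambda\times_d\ZZ^k$ is automatically aperiodic, $C^*(\Lambda)^{\delta_d}$ is simple if and only if $(\Lambda,\NN^k,d)$ is cofinal. Thus it suffices to prove that, under the standing hypotheses ($\Lambda$ row-finite with no sinks or sources and $\Lambda^0$ finite), cofinality of $(\Lambda,\NN^k,d)$ is equivalent to primitivity of $\Lambda$.

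For the implication that primitivity forces simplicity, I would observe that by our convention in Examples~\ref{ex:S-prim}(i) the $k$-graph $\Lambda$ is primitive exactly when $d$ is $\NN^k$-primitive for $\Lambda$. Since $\Lambda$ has no sources, Lemma~\ref{lem:dud} gives that $d$ is upper dense, so the corollary to Proposition~\ref{prop:primcofinal} (equivalently, Proposition~\ref{prop:primcofinal} applied with $S=\NN^k$) yields that $(\Lambda,\NN^k,d)$ is cofinal; Example~\ref{ex:dcase} then delivers simplicity of the AF core. This half of the argument uses neither the finiteness of $\Lambda^0$ nor the absence of sinks.

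For the reverse implication, I would appeal to Theorem~\ref{thm:cofinal-prim}, which carries the substantive content: if $(\Lambda,\ZZ^k,d)$ is cofinal and $\Lambda^0$ is finite with no sinks or sources, then $\Lambda$ is primitive. The one point to reconcile is that Example~\ref{ex:dcase} provides cofinality of $(\Lambda,\NN^k,d)$ whereas Theorem~\ref{thm:cofinal-prim} consumes cofinality of $(\Lambda,\ZZ^k,d)$. These are equivalent: given $a,b\in\ZZ^k$ one chooses $c\in\NN^k$ with $a+c,b+c\in\NN^k$, applies $\NN^k$-cofinality to the pair $(a+c,b+c)$, and cancels $c$ in the defining identity $a+d(\beta)=b+d(\alpha)$; the reverse passage $\NN^k\subseteq\ZZ^k$ is immediate.

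The genuinely hard work has already been carried out in Theorem~\ref{thm:cofinal-prim}, where the finiteness of $\Lambda^0$ is indispensable: it is what lets one take a single exponent $N$ uniform over the finitely many vertices and so produce the common power witnessing primitivity. In the present assembly the only care-points are bookkeeping ones, namely confirming that it is the aperiodicity of the skew product $\Lambda\times_d\ZZ^k$ (unconditional, by the corollary to Proposition~\ref{prop:semigp-aperiodic}) rather than that of $\Lambda$ itself that is invoked, and reconciling the $\NN^k$- and $\ZZ^k$-versions of cofinality as above.
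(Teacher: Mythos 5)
Your proposal is correct, and its overall architecture coincides with the paper's: both directions are reduced to cofinality of the skew product $\Lambda\times_d\ZZ^k$ via Theorem~\ref{thm:coact} (equivalently Example~\ref{ex:dcase}), and the substantive converse is delegated to Theorem~\ref{thm:cofinal-prim} exactly as in the paper. The one place you genuinely diverge is the forward direction: the paper asserts that primitivity makes $(\Lambda,\ZZ^k,d)$ cofinal by appealing to Remarks~\ref{rmks:noss} (a remark about periods of primitive $1$-graphs, which is a rather thin justification), whereas you route through Examples~\ref{ex:S-prim}(i), Lemma~\ref{lem:dud} and Proposition~\ref{prop:primcofinal} --- that is, through the corollary stating that $\NN^k$-primitivity of $d$ plus upper density yields cofinality of $(\Lambda,\NN^k,d)$. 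Your route is the better-grounded of the two, since it uses machinery the paper actually proves, and your two explicit care-points are well taken: the aperiodicity being used is that of $\Lambda\times_d\ZZ^k$ (unconditional, from the corollary to Proposition~\ref{prop:semigp-aperiodic}), not of $\Lambda$; and the translation between $\NN^k$- and $\ZZ^k$-cofinality by adding and cancelling a common $c\in\NN^k$ is exactly what is needed to feed Example~\ref{ex:dcase} into Theorem~\ref{thm:cofinal-prim}. No gaps.
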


\begin{proof}
Suppose that $\Lambda$ is primitive. Then $( \Lambda , \ZZ^k , d)$ is strongly connected and cofinal by Remarks~\ref{rmks:noss}. Hence $C^* ( \Lambda \times_d \ZZ^k )$ is simple and so $C^* ( \Lambda )^{\delta_d}$ is simple by Theorem~\ref{thm:coact}.

Suppose that $C^* ( \Lambda )^{\delta_d}$ is simple. Recall from Example~\ref{ex:dcase} that since $\Lambda$ has no sources
then $\Gamma ( d ) = \ZZ^k$. Then by Theorem~\ref{thm:coact}, $C^* ( \Lambda \times_d \ZZ^k )$ is simple, and hence $( \Lambda , d , \ZZ^k )$ is cofinal by \cite[Theorem 3.1]{RobS} and Proposition~\ref{prop:eta-cofinal}. By Theorem~\ref{thm:cofinal-prim} this implies that $\Lambda$ is primitive.
\end{proof}

\begin{example}
Since it has a single vertex it is easy to see that the $2$-graph $\mathbb{F}^2_\theta$ defined in Examples \ref{ex:kex} (d) is primitive. Hence by Theorem~\ref{thm:fpa} we see that $C^* ( \mathbb{F}^2_\theta )^\gamma$ is simple for all $\theta$. Indeed in \cite[\S 2.1]{DY} it is shown that $C^* ( \mathbb{F}^2_\theta )^\gamma \cong \operatorname{UHF} (mn)^\infty$.
\end{example}

\noindent We now turn our attention to the case when $\Lambda^0$ is infinite.
We adapt the technique used in \cite{PRho} to show that, in many cases the AF core is not simple.

\begin{dfn}
Let $\Lambda$ be a row-finite $k$-graph with no sources. For $v \in \Lambda^0$, $n \in \NN^k$ let
\begin{align*}
V(n,v) &= \{ s ( \lambda ) : \lambda \in v \Lambda^m, m \le n \} \\
FV(n,v) &= V (n,v) \backslash \cup_{i=1}^k V(n-e_i,v) .
\end{align*}
\end{dfn}

\begin{rmks} \label{rmks:useful}
For $v \in \Lambda^0$, $m \le n \in \NN^k$ we have, by definition, that $V(m,v) \subseteq V(n,v)$.

For $v \in \Lambda^0$, $n \in \NN^k$ the set $FV(n,v)$ denotes those vertices
which connect to $v$ with a path of degree $n$ and there is no path from that vertex to $v$ with degree less than $n$.
\end{rmks}

\begin{lem} \label{lem:nogrow}
Let $\Lambda$ be a row-finite $k$-graph with no sources. For $v \in \Lambda^0$, $n \in \NN^k$ then $V(n,v)$ is finite and if $V(n)=V(n-e_i)$ for some $1\le i \le k$ then $V(n+re_i)=V(n-e_i)$ for all $r \ge 0$.
\end{lem}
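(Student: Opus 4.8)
The plan is to treat the two assertions separately: finiteness is immediate, and the stabilisation statement follows from a one-step version that is then iterated.

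For finiteness, I would observe that $V(n,v)$ is the union over all $m \le n$ of the sets $\{ s(\lambda) : \lambda \in v\Lambda^m \}$. There are only finitely many $m \in \NN^k$ with $m \le n$ (precisely $\prod_{i=1}^k (n_i+1)$ of them), and each $v\Lambda^m$ is finite because $\Lambda$ is row-finite. Hence $V(n,v)$ is a finite union of finite sets, and is therefore finite.

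For the stabilisation, fix $v$ and write $V(n)$ for $V(n,v)$; implicitly $n \ge e_i$ so that $n - e_i \in \NN^k$. I would first reduce to the one-step claim: \emph{if $V(p) = V(p-e_i)$ then $V(p+e_i) = V(p)$}. Granting this, the hypothesis $V(n) = V(n-e_i)$ gives $V(n+e_i) = V(n)$; since then $V(n+e_i) = V(n) = V((n+e_i)-e_i)$, the claim applies again with $p = n+e_i$ to yield $V(n+2e_i) = V(n+e_i)$, and an evident induction produces $V(n+re_i) = V(n) = V(n-e_i)$ for all $r \ge 0$. To prove the one-step claim, the inclusion $V(p) \subseteq V(p+e_i)$ is automatic from the monotonicity recorded in Remarks~\ref{rmks:useful}, so the content is in showing $V(p+e_i) \subseteq V(p)$. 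Take $w = s(\lambda)$ with $\lambda \in v\Lambda^m$ and $m \le p+e_i$. If $m_i \le p_i$ then $m \le p$ and $w \in V(p)$ directly. Otherwise $m_i = p_i+1$, and here I would use the factorisation property to write $\lambda = \mu\nu$ with $d(\nu) = e_i$; since $m - e_i \le p$, this gives $s(\mu) \in V(p)$. The crucial move is then to invoke the hypothesis $V(p) = V(p-e_i)$, which supplies $\mu' \in v\Lambda^{m'}$ with $m' \le p-e_i$ and $s(\mu') = s(\mu) = r(\nu)$; then $\mu'\nu \in v\Lambda^{m'+e_i}$ has source $w$ and degree $m'+e_i \le p$, so $w \in V(p)$.

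The main obstacle is precisely this last path-replacement step: one must peel off a single $i$-coloured edge using the factorisation property and splice it back onto a shorter path furnished by the equality $V(p) = V(p-e_i)$, while checking that the range and source conditions match so that the composition $\mu'\nu$ is a legitimate morphism. Everything else is bookkeeping with the coordinatewise order on $\NN^k$.
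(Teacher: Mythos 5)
Your proof is correct and follows essentially the same route as the paper's: peel an $e_i$-edge off the end of a path witnessing membership in $V(p+e_i)$, replace the initial segment by a shorter path supplied by the hypothesis $V(p)=V(p-e_i)$, splice, and iterate. If anything, your case split on whether $m_i \le p_i$ is slightly more careful than the paper, which tacitly assumes the witnessing path has degree exactly $n+e_1$.
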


\begin{proof}
Fix, $v \in \Lambda^0$, $n \in \NN^k$, since $\Lambda$ row-finite it follows that $\cup_{m\le n} v \Lambda^m$ is finite and hence so is $V(n,v)$.

Suppose, without loss of generality that $V(n)=V(n-e_1)$. Let $w \in V(n+e_1)$, then there is $\lambda \in v \Lambda^{n+e_1} w$. Now $\lambda ( 0,n) \in v \Lambda^n$ and so $s ( \lambda (0,n) ) \in V(n) = V(n-e_1)$. Hence there is $\mu \in v \Lambda^m s ( \lambda(0,n) )$ for some $m \le n - e_1$ and so $\mu \lambda (n,n+e_1) \in v \Lambda^{m+e_1}$. Since $s ( \mu \lambda (n,e+e_1) ) = s ( \lambda ) = w$ and $m+e_1 \le n$ it follows that $w \in V(n)$. As $w$ was an arbitrary element of $V(n+e_1)$ it follows that $V(n+e_1) \subseteq V(n) = V(n-e_1)$.
By Remarks~\ref{rmks:useful} we have $V(n-e_1 ) \subseteq V(n+e_1)$ and so
$V(n+e_1)=V(n-e_1)$. It then follows that $V(n+re_1) = V(n-e_1)$ for $r \ge 0$ by an elementary induction argument.
\end{proof}

\noindent
We adopt the following notation, used in \cite{KPS1}: Let $\Lambda$ be a $k$-graph
for $1 \le i \le k$ we set $\Lambda^{\NN e_i} = \cup_{r \ge 0} \Lambda^{r e_i}$.

\begin{prop} \label{prop:doesgrow}
Let $\Lambda$ be a row-finite $k$-graph with no sources such that for all $w \in \Lambda^0$ and for $1 \le i \le k$, the set $s^{-1} \left( w \Lambda^{\NN e_i} \right)$ is infinite. Then for all $n \in \NN^k$, $v \in \Lambda^0$ we have $FV(n,v) \neq \emptyset$.
\end{prop}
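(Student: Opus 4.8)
The plan is to first read off the combinatorial content of the frontier set, and then argue by contradiction, using Lemma~\ref{lem:nogrow} to convert a ``collapse'' of the frontier into the stabilisation of a reachability set, which the infiniteness hypothesis forbids. I would begin by unwinding the definitions. For $m\le n$ one has $m\le n-e_i$ for some $i$ if and only if $m_i<n_i$ for some $i$, i.e. if and only if $m\ne n$; hence $\bigcup_{i=1}^k V(n-e_i,v)$ is exactly the set of $s(\lambda)$ for $\lambda\in v\Lambda^m$ with $m\le n$ and $m\ne n$. Consequently
\[
FV(n,v)=\{\,w\in\Lambda^0 : v\Lambda^n w\neq\emptyset \text{ and } v\Lambda^m w=\emptyset \text{ for all } m\le n \text{ with } m\neq n\,\},
\]
so that $FV(n,v)\ne\emptyset$ says precisely that some vertex is \emph{first} reached from $v$ at degree exactly $n$. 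Writing the finite set $B=\bigcup_{i=1}^k V(n-e_i,v)$ (finite by Lemma~\ref{lem:nogrow}), the assertion is equivalent to $V(n,v)\neq B$: not every degree-$n$ path out of $v$ terminates at a vertex already reachable more cheaply.

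The backbone of the argument is a pure-direction version. Fixing $v$ and $i$, I claim that $V((r+1)e_i,v)\supsetneq V(re_i,v)$ for every $r$. Indeed $V(re_i,v)$ is exactly the set of vertices reached from $v$ by paths of degree $se_i$ with $s\le r$; if this set failed to grow at some $r$, then applying Lemma~\ref{lem:nogrow} with $n=re_i$ would give $V(re_i+se_i,v)=V((r-1)e_i,v)$ for all $s\ge0$, so that the whole collection of sources $s(v\Lambda^{\NN e_i})$ (the reachability set of the hypothesis) would be contained in the finite set $V(re_i,v)$, contradicting the infiniteness assumption at $v$. This disposes of the pure-direction frontiers, and together with the base case $FV(0,v)=\{v\}$ it anchors the general statement.

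For general $n$ I would argue by contradiction. Assume $FV(n,v)=\emptyset$, i.e. $V(n,v)=B$. Factoring a degree-$(n+e_j)$ path out of $v$ as a degree-$n$ prefix followed by a single edge of degree $e_j$, and rerouting the prefix through a cheaper path to its endpoint exactly as in the proof of Lemma~\ref{lem:nogrow}, shows that the collapse propagates to the whole up-set: $FV(m,v)=\emptyset$ for every $m\ge n$. The aim is then to feed this information back into a \emph{single} pure direction: choosing $w_0=s(\gamma)$ for some $\gamma\in v\Lambda^n$ and chasing the factorisation/rerouting along $w_0\Lambda^{\NN e_i}$, one wants to force the infinite reachability set attached to $w_0$ in direction $i$ into a finite reachability set, contradicting the infiniteness hypothesis at $w_0$ and thereby completing the proof.

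The main obstacle is precisely this last conversion. Frontier-emptiness is a \emph{joint} condition across the $k$ coordinate directions (a set difference by a union), whereas Lemma~\ref{lem:nogrow} and the infiniteness hypothesis each act one direction at a time: a vertex can be reachable more cheaply by a genuinely mixed path even when no single coordinate can be economised, so upward propagation by itself does not immediately trap a pure-direction reachability set inside a finite set. The delicate step is therefore to organise the rerouting so that emptiness of the $n$-frontier is transferred to stabilisation of $V(\,\cdot\,e_i,w_0)$ for a suitable vertex $w_0$ and direction $i$; this is exactly where the strength of the hypothesis must be used, and it is the point demanding the most care.
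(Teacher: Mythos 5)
Your proposal is a correct and insightful partial argument, but it stops short of a proof: the step you yourself flag as ``the main obstacle'' --- converting the joint collapse $V(n,v)=\bigcup_{i=1}^k V(n-e_i,v)$ into a single-direction stabilisation on which Lemma~\ref{lem:nogrow} and the infiniteness hypothesis can act --- is precisely the content of the proposition beyond that lemma, and you do not supply it. Everything you do prove is sound: the reformulation of $FV(n,v)$ as the set of vertices first reached at degree exactly $n$, the base case $FV(0,v)=\{v\}$, the pure-direction case (if $V(re_i,v)$ ever fails to grow then Lemma~\ref{lem:nogrow} traps the infinite set $s^{-1}(v\Lambda^{\NN e_i})$ inside a finite set), and the upward propagation $FV(m,v)=\emptyset$ for all $m\ge n$. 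But the contradiction is never actually reached, so as it stands this is a plan plus a correct special case, not a proof.

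For comparison, the paper closes exactly this gap in one line: from $FV(n,v)=\emptyset$ it asserts ``without loss of generality $V(n,v)=V(n-e_1,v)$,'' and then runs precisely your pure-direction argument based at $w=s(\lambda)$ for $\lambda\in v\Lambda^n$: for any $\mu\in s(\lambda)\Lambda^{re_1}$ the path $\lambda\mu$ shows $s(\mu)\in V(n+re_1,v)$, which by Lemma~\ref{lem:nogrow} equals the finite set $V(n-e_1,v)$, contradicting infiniteness of $s^{-1}\left(s(\lambda)\Lambda^{\NN e_1}\right)$. So the only ingredient separating your write-up from the paper's is the reduction from the union $\bigcup_i V(n-e_i,v)$ to a single $V(n-e_i,v)$. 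Your instinct that this reduction is the delicate point is legitimate: the identity $V(n,v)=\bigcup_i V(n-e_i,v)$ does not formally imply $V(n,v)=V(n-e_i,v)$ for one fixed $i$, since the sets $V(n-e_i,v)$ need not be nested, so the paper's ``without loss of generality'' itself calls for justification. Nevertheless, the verdict on your submission is that it identifies but does not resolve the essential step, and therefore does not establish the proposition.
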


\begin{proof}
Suppose, for contradiction, that $FV(n,v) = \emptyset$ for some $n \in \NN^k$ and $v \in \Lambda^0$. Then, without loss of generality we may assume that $V(n)=V(e-e_1)$.

Let $\lambda \in v \Lambda^n$, then $s ( \lambda ) \in V(n) = V(n-e_1)$. Fix $r \ge 0$, then since $\Lambda$ has no sources there is $\mu \in s ( \lambda ) \Lambda^{r e_1}$. Then $\lambda \mu \in v \Lambda^{n+re_1}$ and so $s ( \lambda \mu ) = s ( \mu ) \in V( n+re_1,v)$. By Lemma ~\ref{lem:nogrow} it follows that $V( n + r e_1 ) = V ( n - e_1 )$ and so for any $\mu \in s ( \lambda ) \Lambda^{\NN e_1}$ we have
$s ( \mu ) \in V (n-e_1)$. By Remarks~\ref{rmks:useful} $V ( n - e_1 )$ is finite
and so we have contradicted the hypothesis that $s^{-1} \left( w \Lambda^{\NN e_1} \right)$ is infinite.
\end{proof}

\noindent Note that $k$-graphs satisfying the hypothesis of Proposition~\ref{prop:doesgrow} must have infinitely many vertices.
\noindent The following result generalises results from \cite{PRho}:

\begin{thm} \label{thm:notcofinal}
Let $\Lambda$ be a row-finite $k$-graph with no sources such that for all $w \in \Lambda^0$ and for $1 \le i \le k$, the set $s^{-1} \left( w \Lambda^{\NN e_i} \right)$ is infinite. Then $\Lambda \times_d \ZZ^k$ is not cofinal.
\end{thm}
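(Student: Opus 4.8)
The plan is to reduce to showing that the system $(\Lambda, \ZZ^k, d)$ fails to be cofinal, and then to manufacture an explicit choice of vertices and group elements that defeats the cofinality condition using Proposition~\ref{prop:doesgrow}. By Proposition~\ref{prop:eta-cofinal}, $\Lambda \times_d \ZZ^k$ is cofinal if and only if $(\Lambda, \ZZ^k, d)$ is cofinal, and by Definition~\ref{def:etacofinal} the latter asks that for all $v,w \in \Lambda^0$ and $a,b \in \ZZ^k$ there is $N \in \NN^k$ so that every $\alpha \in w\Lambda^N$ admits $\beta \in v\Lambda s(\alpha)$ with $a + d(\beta) = b + d(\alpha)$. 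Since $d(\alpha) = N$, this equation forces $d(\beta) = (b-a) + N$, so the requirement is equivalent to $v\Lambda^{(b-a)+N} s(\alpha) \neq \emptyset$ for every $\alpha \in w\Lambda^N$. To refute cofinality I must exhibit $v,w,a,b$ such that for \emph{every} $N \in \NN^k$ there is at least one $\alpha \in w\Lambda^N$ with $v\Lambda^{(b-a)+N} s(\alpha) = \emptyset$.

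First I would specialise the data: take $w = v$ a fixed vertex and choose $a,b \in \ZZ^k$ with $b - a = -e_1$ (for instance $a = e_1$, $b = 0$; here $k \ge 1$ guarantees $e_1$ exists). With this choice the required degree becomes $(b-a)+N = N - e_1$, so I need, for each $N$, a path $\alpha \in v\Lambda^N$ whose source cannot be reached from $v$ by a path of degree $N - e_1$. The family $FV(N,v)$ is tailor-made for this: by Proposition~\ref{prop:doesgrow}, the hypothesis that each $s^{-1}(w\Lambda^{\NN e_i})$ is infinite guarantees $FV(N,v) \neq \emptyset$ for every $N$ and $v$.

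The heart of the argument is then a short unpacking of the definition of $FV(N,v)$. Pick $u \in FV(N,v) \subseteq V(N,v)$. Because $u \in V(N,v)$ there is a path from $v$ to $u$ of some degree $m \le N$; if $m \neq N$ then $m \le N - e_j$ for the coordinate $j$ with $m_j < N_j$, forcing $u \in V(N-e_j,v)$ and contradicting $u \in FV(N,v)$. Hence $m = N$ and there is $\alpha \in v\Lambda^N$ with $s(\alpha) = u$. On the other hand, $u \notin V(N-e_1,v)$ means there is no path from $v$ to $u$ of any degree $\le N - e_1$; in particular $v\Lambda^{N-e_1} u = \emptyset$, so this $\alpha$ has no admissible $\beta$. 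For the remaining case $N_1 = 0$, the degree $N - e_1$ is not in $\NN^k$, so $v\Lambda^{N-e_1} s(\alpha) = \emptyset$ automatically for any $\alpha \in v\Lambda^N$ (and $v\Lambda^N \neq \emptyset$ since $\Lambda$ has no sources). Either way the cofinality condition fails at this $N$, and since $N$ was arbitrary, $(\Lambda, \ZZ^k, d)$ is not cofinal, whence $\Lambda \times_d \ZZ^k$ is not cofinal.

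The genuine content is already carried by Proposition~\ref{prop:doesgrow}, so the main thing to get right here is the bookkeeping: correctly translating cofinality of the skew product into the degree equation $d(\beta) = (b-a)+N$ through the source map $s(\lambda,t) = (s(\lambda), t + d(\lambda))$, and then checking that the obstruction supplied by $FV(N,v)$ lines up exactly with the degree $N - e_1$ that this equation demands. The only edge case needing separate mention is $N_1 = 0$, handled trivially above; otherwise the definition of $FV(N,v)$ delivers precisely the newly reached vertex that cannot be attained one step earlier.
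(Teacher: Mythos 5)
Your proof is correct. The key input in both arguments is Proposition~\ref{prop:doesgrow}, but your route through it differs from the paper's in a way worth noting. The paper argues directly inside the skew product: it fixes $v$, lists the finite set $W=s^{-1}(v\Lambda^{e_1})=\{w_1,\dots,w_n\}$ (using row-finiteness), applies the cofinality of $\Lambda\times_d\ZZ^k$ to each pair $(v,0),(w_i,0)$ to get constants $N_i$, sets $N=\max_i N_i$, and then derives a contradiction by taking $\lambda\in v\Lambda^{N+e_1}$ with $s(\lambda)\in FV(N+e_1,v)$ and factoring off its initial $e_1$-edge (which lands at some $w_i$), invoking Lemma~\ref{lem:cofinalconn}(b) to pass from $N_i$ to $N$. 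You instead pass through Proposition~\ref{prop:eta-cofinal} to the system $(\Lambda,\ZZ^k,d)$ and exploit the freedom in the semigroup coordinates: choosing $w=v$, $a=e_1$, $b=0$ turns the cofinality requirement into $v\Lambda^{N-e_1}s(\alpha)\neq\emptyset$, which $FV(N,v)\neq\emptyset$ refutes for every $N$ at once (with the $N_1=0$ case trivial). This eliminates the need for the finite set $W$, the maximum over the $N_i$, and Lemma~\ref{lem:cofinalconn}(b); in effect, translating the obstruction by $e_1$ in the group coordinate replaces the paper's factoring-off of the first $e_1$-edge. Your unpacking of $FV(N,v)$ --- that membership forces a path of degree exactly $N$ and excludes any path of degree $\le N-e_1$ --- is also carried out correctly, so the argument is complete.
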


\begin{proof}
Suppose, for contradiction, that $\Lambda \times_d \ZZ^k$ is cofinal.

Fix $v \in \Lambda^0$ then since $\Lambda$ is row-finite and has no sources $W = s^{-1} \left( v \Lambda^{e_1} \right)$ is finite and nonempty. Without loss of generality let $W = \{ w_1 , \ldots , w_n \}$.

Since $\Lambda \times_d \ZZ^k$ is cofinal, for $1 \le i \le n$ if we consider $(w_i,0)$ and $(v,0) \in \Lambda^0 \times \ZZ^k$ then there is $N_i \in \NN^k$ such that for all $( \alpha , 0 ) \in (w_i,0) \left( \Lambda \times_d \ZZ^k \right)^{N_i}$ we have $(v,0) \left( \Lambda \times_d \ZZ^k \right) ( s ( \alpha ) ,N_i ) \neq \emptyset$. Let $N = \max \{ N_1 , \ldots , N_n \}$. By Proposition~\ref{prop:doesgrow} $FV(N+e_1,v)\neq \emptyset$, hence there is $\lambda \in v \Lambda^{N+e_1}$ such that there is no path of degree less than $N+e_1$ from $s(\lambda)$ to $v$. Without loss of generality $s (\lambda(0,e_1))=w_1$, and so $( \lambda (e_1,N+e_1) , 0 ) \in (w_1,0) \left( \Lambda \times_d \ZZ^k \right)^N$. Since $N \ge N_1$ and $\Lambda$ has no sources, by Lemma~\ref{lem:cofinalconn}(ii) there is $( \alpha , 0 ) \in (v,0) \left( \Lambda \times_d \ZZ^k \right) ( s ( \lambda ), N)$ which implies that $\alpha \in v \Lambda^N s ( \lambda )$, contradicting the defining property of $\lambda \in v \Lambda^{N+e_1}$.
\end{proof}

\begin{examples}
\begin{enumerate}
\item Let $\Lambda$ be a strongly connected $k$-graph with $\Lambda^0$ infinite, then $\Lambda$ has no sources and for all $w \in \Lambda^0$ we have $s^{-1} \left( w \Lambda^{\NN e_i} \right)$ is infinite for $1 \le i \le k$. Hence by Theorem~\ref{thm:notcofinal} it follows that $\Lambda \times_d \ZZ^k$ is not cofinal.
\item Let $\Lambda$ be a $k$-graph with $\Lambda^0$ infinite, no sources and no paths with the same source and range. Then for all $w \in \Lambda^0$ we have $s^{-1} \left( w \Lambda^{\NN e_i} \right)$ is infinite for $1 \le i \le k$. Hence by Theorem~\ref{thm:notcofinal} it follows that $\Lambda \times_d \ZZ^k$ is not cofinal.
\end{enumerate}
\end{examples}


\begin{thebibliography}{kp3}


\bibitem{ALRS} A. an Huef, M. Laca, I. Raeburn and A. Sims, \textit{KMS states on $C^*$-algebras associated to higher rank graphs}, 
 J. Math. Anal. Appl., \textbf{405} (2013), 388--399. 

\bibitem{C} T.\ Crisp. \textit{Corners of graph algebras}, J. Operator Theory, \textbf{60} (2008), 253--271.

\bibitem{DPY} K. R. Davidson, S. C. Power and D. Yang, \textit{Atomic representations of rank 2 graph algebras}, J. Funct. Anal., \textbf{255} (2008), 819--853.

\bibitem{DY} K. R.\ Davidson and D.\ Yang, \textit{Periodicity in rank 2 graph algebras}, Canad. J. Math., {\bf 61} (2009), 1239--1261.

\bibitem{E} D. G. Evans, \textit{On the K-theory of higher rank graph C*-algebras}, New York J. Math., \textbf{14} (2008), 1--31.

\bibitem{HRSW} {R. Hazlewood, I. Raeburn, A. Sims and S. Webster}, \textit{Remarks on some fundamental results about higher-rank graphs and their $C^*$-algebras}, Proc. Edinburgh Math. Soc., {\bf 56} (2013), 575--597.

\bibitem{KaP} S.\ Kang and D.\ Pask. \textit{Aperiodicity and the primitive ideal space of a row-finite $k$-graph $C^*$-algebra}, {\tt arXiv:math/1105.1208 [math.OA].}

\bibitem {KT} Y. Katayama and H. Takehana, \textit{On automorphisms of generalized Cuntz algebras}, Internat. J. Math., \textbf{9} (1998), 493--512.

\bibitem{KP} A.\ Kumjian and D.\ Pask, \textit{Higher Rank Graph $C^{*}$-algebras}, New York J. Math., \textbf{6} (2000), 1--20.

\bibitem{kp3} A. Kumjian and D. Pask, \textit{Actions of ${\bf Z}^k$ associated to higher rank graphs}, Ergod. Th. \& Dynam. Sys., \textbf{23} (2003), 1153--1172.

\bibitem{KPS1} A. Kumjian, D. Pask and A. Sims, \textit{$C^*$-algebras associated to covering of $k$-graphs}, Doc. Math., \textbf{13} (2008), 161--205.


\bibitem{LewinSims} P. Lewin and A. Sims, \textit{Aperiodicity and Cofinality for Finitely Aligned Higher-Rank Graphs}, Math. Proc. Cambridge Philosophical Soc., \textbf{149} (2010), 333--350.

\bibitem{MPR} B.\ Maloney, D.\ Pask and I.\ Raeburn, \textit{Skew products of higher-rank graphs and crossed products by semigroups}, Semigroup Forum (to appear).

\bibitem{PQR} D.\ Pask, J.\ Quigg and I.\ Raeburn, \textit{Coverings of $k$-graphs}, J. Alg., \textbf{289} (2005), 161--191.

\bibitem{PRY} D. Pask, I. Raeburn and T. Yeend, \textit{Actions of semigroups on directed graphs and their $C^*$-algebras}, J. Pure Appl. Algebra, \textbf{159} (2001), 297--313.

\bibitem{pr1} D.\ Pask and I.\ Raeburn, \textit{On the $K$-Theory of Cuntz-Krieger algebras}, Publ.\ RIMS Kyoto, \textbf{32} (1996), 415--443.

\bibitem{PRW} D. Pask, I. Raeburn and N. Weaver, \textit{Periodic $2$-graph arising from subshifts}, Bull. Aust. Math. Soc., \textbf{82} (2010), 120--138.

\bibitem{PRho} D.\ Pask and S-J.\ Rho. \textit{Some intrinsic properties of simple graph $C^*$-algebras}, Operator Algebras and Mathematical Physics, Constanza, Romania, 2001, Theta Foundation, \textbf{2003}, 325--340.

\bibitem{Po} S.\ Power, \textit{Classifying higher rank analytic Toeplitz algebras}, New York J. Math., {\bf 13} (2007), 271--298.

\bibitem{Q} J.\ Quigg \textit{Discrete $C^*$--coactions and $C^*$--algebraic bundles.} J.\ Austral.\ Math.\ Soc., \textbf{60} (1996), 204--221.

\bibitem{RSY} I.\ Raeburn, A.\ Sims and T.\ Yeend, \textit{Higher-rank graphs and their $C^{*}$-algebras}, Proc. Edinburgh Math. Soc., \textbf{46} (2003), 99--115.

\bibitem{RSY2} I.\ Raeburn, A.\ Sims and T.\ Yeend, \textit{The $C^{*}$-algebras of finitely aligned higher-rank graphs} J. Funct. Anal., \textbf{213} (2004), 206--240.

\bibitem{RobS} D. I.\ Robertson and A.\ Sims, \textit{Simplicity of $C^*$-algebras associated to higher rank graphs}, Bull. London Math. Soc., \textbf{39} (2007), 337--344.

\bibitem{RobertsonSteger} G.\ Robertson, T.\ Steger, \textit{Affine buildings, tiling systems and higher rank Cuntz-Krieger algebras}, J. reine angew. Math., \textbf{513} (1999), 115--144.

\end{thebibliography}
\end{document}